\newtheorem{theorem}{Theorem}[section]
\newtheorem{proposition}[theorem]{Proposition}
\newtheorem{lemma}[theorem]{Lemma}
\newtheorem{corollary}[theorem]{Corollary}
\theoremstyle{definition}
\newtheorem{example}[theorem]{Example}
\newtheorem{definition}[theorem]{Definition}
\newcommand{\G}{\ensuremath{\mathcal{G}}}
\newcommand{\A}{\ensuremath{\mathcal{A}}}
\title{Strong external difference families in abelian and non-abelian groups}
\author{Sophie Huczynska \thanks{School of Mathematics \& Statistics, University of St Andrews, St Andrews, UK; email: sh70@st-andrews.ac.uk}, Christopher Jefferson \thanks{School of Computer Science, University of St Andrews, St Andrews, UK; email: caj21@st-andrews.ac.uk} and Silvia Nep\v{s}insk\'{a} \thanks{School of Computer Science, University of St Andrews, St Andrews, UK}}
\date{}
\begin{document}
\maketitle

\begin{abstract}
Strong external difference families (SEDFs) have applications to cryptography and are rich combinatorial structures in their own right.  We extend the definition of SEDF from abelian groups to all finite groups, and introduce the concept of equivalence for external difference families (EDFs) and SEDFs.  We prove new recursive results which significantly extend known constructions for SEDFs and generalized SEDFs (GSEDFs) in cyclic groups, and present the first family of non-abelian SEDFs.  We establish the existence of at least two non-equivalent $(k^2+1, 2, k,1)$-SEDFs for every $k>2$, and we begin the task of enumerating SEDFs, via a computational approach which yields complete results for all groups up to order $24$.
\end{abstract}
\begin{center}

\end{center}
\renewcommand{\thefootnote}{\fnsymbol{footnote}} 
\footnotetext{\emph{2010 MSC} 05B10 (94B05)}     
\renewcommand{\thefootnote}{\arabic{footnote}} 

\section{Introduction}

There has been considerable recent interest in strong external difference families (SEDFs), which have applications to cryptography and are rich combinatorial structures in their own right (see \cite{BaJiWeZh}, \cite{HuPa}, \cite{JeLi}, \cite{MaSt},\cite{PaSt}, \cite{WeYaFuFe}).  Most of this activity has centred around identifying parameters for which a SEDF exists or does not exist, and obtaining constructions via the application of classic techniques such as cyclotomy.  Up till now, all SEDFs have been in abelian groups.

We are motivated by the following questions: what is the situation for SEDFs in general finite groups, not simply abelian groups?  For given parameters, which groups contain SEDFs with these parameters?  How many ``different'' SEDFs exist with the same parameters?  

In this paper, we introduce the notion of equivalence for SEDFs, and characterise the order of groups with admissible parameters for SEDFs.  We present a recursive framework for constructing families of SEDFs with $\lambda=1$ (and related generalized SEDFs) in cyclic groups,  which encompasses known results on SEDFs and GSEDFs.  We present the first non-abelian SEDFs: a construction for an infinite non-abelian family using dihedral groups.  We establish the existence of at least two non-equivalent $(k^2+1, 2, k,1)$-SEDFs for every $k>2$.  Finally, we begin the task of enumerating SEDFs, and present complete results for all groups up to order $24$, underpinned by a computational approach using constraint satisfaction programming. 

\section{Strong External Difference Families}

External difference families were first introduced in \cite{OgKuStSi} in relation to AMD codes, while strong EDFs were subsequently introduced in \cite{PaSt} to correspond to strong AMD codes.  They were defined in abelian groups. 

In this paper, we extend the concept of EDF and SEDF in the natural way to any group of order $n$, whether abelian or non-abelian.  The definitions precisely correspond to the originals, with the removal of the word ``abelian''.  Since the differences are defined in terms of ordered pairs, there is no ambiguity in this definition.  For abelian groups, additive notation is generally used; however when we focus on the non-abelian and general cases, we will adopt multiplicative notation.

\begin{definition}[External difference family]
Let $\G$ be a group of order $n$.
An {\bf $(n,m,k,\lambda)$-external difference family} 
(or {\bf $(n,m,k,\lambda)$-EDF})
is a set of $m \geq 2$ disjoint $k$-subsets of $\G$, say $A_1, \dots , A_m$, such that the multiset
\begin{equation*}
M=\{ xy^{-1}: x \in A_i, y \in A_j, i \neq j \}
\end{equation*}
comprises $\lambda$ occurrences of each non-identity element of $\G$.
\end{definition}

\begin{definition}[Strong external difference family]
Let $\G$ be a group of order $n$.
An {\bf $(n,m,k,\lambda)$-strong external difference family} 
(or {\bf $(n,m,k,\lambda)$-SEDF})
is a set of $m \geq 2$ disjoint $k$-subsets of $\G$, say $A_1, \dots , A_m$, such that, for every $i$, $1 \leq i \leq m$, the multiset
\begin{equation*}
M_i=\{ xy^{-1}: x \in A_i, y \in\cup_{j \neq i} A_j \}
\end{equation*}
comprises $\lambda$ occurrences of each non-identity element of $\G$.  (An $(n,m,k,\lambda)$-SEDF is, by definition, an $(n,m,k,m\lambda)$-EDF.)
\end{definition}

For every group $\G$, there is at least one SEDF.  This has $k=1$ and is often referred to as the \emph{trivial} SEDF. 
\begin{example}\label{singletons}
Let $\G=\{ g_1, \ldots, g_n \}$ be a group of order $n$.  Then $A_i=\{g_i\}$ for $1 \leq i \leq n$ is an $(n,n,1,1)$-SEDF over $\G$.
\end{example}

To our knowledge, the existing literature contains no examples of non-trivial non-abelian SEDFs.  The only known explicit construction for non-abelian EDFs is given in \cite{HuPa2}; these EDFs have the \emph{bimodal} property and are not SEDFs.   In \cite{Bu1}, new existence results on disjoint difference families (DDFs) in non-abelian groups are established; since $(n,k,k-1)$-DDFs correspond to certain \emph{near-complete} EDFs (see \cite{ChDi} for the proof in the abelian setting), these should guarantee further non-abelian EDFs, but analysis of parameters shows that these will not be strong.

An EDF may be viewed as one generalization of a SEDF; a different generalization is a GSEDF,  defined in \cite{PaSt} and further explored in  \cite{LuNiCa}.  We extend this also to the non-abelian setting.

\begin{definition}[Generalized strong external difference family]
Let $\G$ be a group of order $n$.
An {\bf $(n,m;k_1, \ldots, k_m;\lambda_1, \ldots, \lambda_m)$-generalized strong external difference family} 
(or {\bf $(n,m;k_1, \ldots, k_m;\lambda_1, \ldots, \lambda_m)$-GSEDF})
is a set of $m \geq 2$ disjoint $k_i$-subsets of $\G$, say $A_1, \dots , A_m$, such that, for every $i$, $1 \leq i \leq m$, the multiset
\begin{equation*}
M_i=\{ xy^{-1}: x \in A_i, y \in\cup_{j \neq i} A_j \}
\end{equation*}
comprises $\lambda_i$ occurrences of each non-identity element of $\G$.  (An $(n,m;k, \ldots, k;\lambda, \ldots, \lambda)$-GSEDF is, by definition, an $(n,m,k,\lambda)$-SEDF.)
\end{definition}

\section{Equivalence for EDFs and SEDFs}

In order to enumerate and compare EDFs and SEDFs, it is essential to have a notion of what it means for two such objects to be equivalent.  Equivalence of EDFs has not been previously discussed in the literature; we will adopt a definition consistent with the definition of equivalence for difference families and difference sets (see the Handbook of Combinatorial Designs, Part VI, 16.93, 18.8 \cite{CoDi}).

In preparation for this definition, we establish the following:

\begin{proposition}\label{translate}
Let $\G$ be a group and let $g \in G$.
\begin{itemize}
\item[(i)] If $\A=\{A_1, \ldots, A_m\}$ is an $(n,m,k,\lambda)$-EDF over $\G$,  then the left and right translates $g \A$ and $\A g$ of $\A$ also form $(n,m,k,\lambda)$-EDFs over $\G$.
\item[(ii)] If $\A=\{A_1, \ldots, A_m\}$ is an $(n,m,k,\lambda)$-SEDF over $\G$,  then the left and right translates $g\A$ and $\A g$ of $\A$ also form $(n,m,k,\lambda)$-SEDFs over $\G$.
\end{itemize}
\end{proposition}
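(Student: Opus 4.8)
The plan is to verify directly that translation by a fixed group element preserves every defining property: being a family of $m$ pairwise disjoint $k$-subsets, and realising the prescribed external difference multiset(s). The subset-size and disjointness conditions are immediate, since $x \mapsto gx$ and $x \mapsto xg$ are bijections of $\G$: we get $|gA_i| = |A_i g| = |A_i| = k$, while $gA_i \cap gA_j = g(A_i \cap A_j)$ and $A_i g \cap A_j g = (A_i \cap A_j)g$ are empty whenever $A_i \cap A_j = \emptyset$. So the real content is tracking what happens to the difference multisets, and here the left and right cases behave slightly differently.

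For the right translate $\A g = \{A_1 g, \ldots, A_m g\}$, the key observation is that for $x \in A_i$ and $y \in A_j$ one has $(xg)(yg)^{-1} = x g g^{-1} y^{-1} = xy^{-1}$. Hence, in the EDF case, the difference multiset $M$ of $\A g$ is literally identical to that of $\A$; in the SEDF case each $M_i$ is likewise unchanged, using $\cup_{j \neq i}(A_j g) = (\cup_{j \neq i} A_j)g$ together with the same cancellation. Thus $\A g$ inherits the parameters of $\A$ with essentially nothing further to check.

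For the left translate $g\A = \{gA_1, \ldots, gA_m\}$, the analogous computation gives $(gx)(gy)^{-1} = g\,xy^{-1}\,g^{-1}$, so the difference multiset of $g\A$ is the image of that of $\A$ under the inner automorphism $\phi_g\colon h \mapsto ghg^{-1}$. I would then invoke that $\phi_g$ is a bijection of $\G$ fixing the identity and permuting the non-identity elements, so it carries any multiset consisting of $\lambda$ copies of each non-identity element to a multiset with exactly the same property. Applying this to $M$ (EDF case) and to each $M_i$ (SEDF case, now using $\cup_{j \neq i}(gA_j) = g(\cup_{j \neq i} A_j)$) shows $g\A$ has the required parameters.

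There is no genuine obstacle here; the only point that deserves care — and the reason the statement is worth recording in the non-abelian setting — is the asymmetry just noted: right translation preserves external differences exactly, whereas left translation preserves them only up to conjugation, so one must pass through the remark that inner automorphisms respect the ``uniform on the non-identity elements'' condition. In the abelian case both directions collapse to the triviality $(x+g)-(y+g) = x-y$.
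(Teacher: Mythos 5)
Your proof is correct and follows essentially the same route as the paper: right translation leaves the difference multisets literally unchanged via $(xg)(yg)^{-1}=xy^{-1}$, while left translation conjugates them, and conjugation is an automorphism fixing the identity, hence preserves the ``$\lambda$ copies of each non-identity element'' property. Your additional explicit check of sizes and disjointness is a harmless extra the paper leaves implicit.
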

\begin{proof}
First consider right translates: by the reversal rule for inverses, $(xg)(yg)^{-1}=(xg)(g^{-1}y^{-1})=x(g g^{-1})y^{-1}=xy^{-1}$  for $x,y \in \G$ .  So for any $A_i g (A_j g)^{-1}$ ($i \neq j$), the multiset of differences is precisely the same as the multiset corresponding to $A_i (A_j)^{-1}$.  For left translates, $(gx)(gy)^{-1}=gx y^{-1} g^{-1}$, the conjugate of $xy^{-1}$ by $g^{-1}$.   So the multiset of differences corresponding to $(g A_i)(g A_j)^{-1}$ is the set of conjugates of the differences $A_i (A_j)^{-1}$ by (fixed) $g^{-1} \in \G$.  For an EDF $\A$, the union of multisets of such differences for all $i \neq j$ comprises each non-identity group element $\lambda$ times; since conjugation by $g^{-1}$ is an automorphism of the group $\G$ (and in particular fixes the identity), the corresponding union for $g \A$ also comprises each non-identity group element $\lambda$ times. An analogous argument holds if $\A$ is an SEDF.
\end{proof}

\begin{proposition}\label{auto}
Let $\G$ be a group, let $\A=\{A_1, \ldots, A_m\}$ be an $(n,m,k,\lambda)$-EDF (respectively, SEDF) over $\G$ and let $\alpha$ be an automorphism of $\G$, Then $\alpha(\A)=\{ \alpha(A_1), \ldots, \alpha(A_m)\}$ forms an $(n,m,k,\lambda)$-EDF  (respectively, SEDF) over $\G$.  
\end{proposition}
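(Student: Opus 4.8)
The plan is to verify directly that $\alpha(\A)$ satisfies the defining conditions, exploiting the fact that an automorphism is simultaneously a bijection of the underlying set and a homomorphism compatible with the difference operation. First I would note that, since $\alpha$ is a bijection, each $\alpha(A_i)$ is again a $k$-subset of $\G$, and injectivity of $\alpha$ guarantees that the images $\alpha(A_1), \ldots, \alpha(A_m)$ remain pairwise disjoint. So $\alpha(\A)$ is a legitimate candidate family of the right shape.

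Next I would treat the difference multisets. The key identity is $\alpha(x)\alpha(y)^{-1} = \alpha(x)\alpha(y^{-1}) = \alpha(xy^{-1})$ for all $x, y \in \G$, valid because $\alpha$ is a homomorphism and hence commutes with inversion. Consequently, for $i \neq j$, the multiset $\{ uv^{-1} : u \in \alpha(A_i),\, v \in \alpha(A_j) \}$ is exactly the image under $\alpha$ of the multiset $\{ xy^{-1} : x \in A_i,\, y \in A_j \}$, with all multiplicities preserved since $\alpha$ is injective. Summing over the appropriate index pairs, the EDF difference multiset $M$ for $\alpha(\A)$ equals $\alpha(M)$ where $M$ is the corresponding multiset for $\A$, and similarly each $M_i$ for $\alpha(\A)$ equals $\alpha(M_i)$ for $\A$ in the SEDF case.

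Finally I would conclude by invoking that an automorphism fixes the identity and therefore permutes the non-identity elements of $\G$ among themselves. If the multiset $M$ (resp.\ each $M_i$) for $\A$ comprises $\lambda$ occurrences of every non-identity element, then applying the bijection $\alpha$ sends this to a multiset with $\lambda$ occurrences of every non-identity element; this is precisely the required property for $\alpha(\A)$. Doing this for every $i$ disposes of the SEDF case as well.

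I do not anticipate any genuine obstacle: this argument is structurally the same as, and slightly simpler than, the proof of Proposition~\ref{translate}, since we never leave the setting of honest bijections and never need to appeal to conjugation. The only point requiring a moment's care is the bookkeeping that the image of a multiset under a bijection preserves multiplicities, together with the observation that ``non-identity element'' is a property stable under automorphisms.
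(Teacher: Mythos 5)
Your proposal is correct and follows essentially the same route as the paper's proof: both rest on the identity $\alpha(x)\alpha(y)^{-1}=\alpha(xy^{-1})$ and the observation that an automorphism bijectively maps the difference multisets while fixing the identity, so the count of $\lambda$ occurrences of each non-identity element is preserved. Your additional remarks on disjointness and cardinality of the image sets are fine but not a substantive departure.
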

\begin{proof}
For $x \in \alpha(A_i)$ and $y \in \alpha(A_j)$, $j \neq i$, consider the difference $xy^{-1}$.  Here $x=\alpha(a_i)$ and $y=\alpha(a_j)$ for some $a_i \in A_i, a_j \in A_j$. Then $xy^{-1}=\alpha(a_i) (\alpha(a_j))^{-1}=\alpha(a_i {a_j}^{-1})$ since $\alpha$ is an automorphism.  For an EDF $\A$, the union of multisets of external differences $A_i(A_j)^{-1}$ for all $i \neq j$ comprises each non-identity group element $\lambda$ times.  Since $\alpha$ is an automorphism of the group $\G$, the union of multisets of external differences $\alpha(A_i) (\alpha(A_j))^{-1}=\alpha(A_i A_j^{-1}) \,(i \neq j)$ also comprises each non-identity group element $\lambda$ times.  An analogous argument holds if $\A$ is an SEDF.
\end{proof}

We now make the following definition:

\begin{definition}\label{equivalent}
\begin{itemize}
\item[(i)]
Let $\G$ be a group of order $n$, and let $\A=\{A_1, \ldots, A_m\}$ and $\A^{\prime}=\{A^{\prime}_1, \ldots, A^{\prime}_m\}$ be two $(n,m,k,\lambda)$-EDFs (respectively, SEDFs) over $\G$.  We shall say that $\A$ is \emph{equivalent} to $\A^{\prime}$ if there is an automorphism $\alpha$ of $G$ and elements $g,h \in \G$ such that, for all $1 \leq i \leq m$,  $A^{\prime}_i=h(\alpha(A_i))g$.
\item[(ii)]
Two $(n,m,k,\lambda)$-EDFs (respectively, SEDFs)  $\A=\{A_1, \ldots, A_m\}$ (in group $\G_1$) and $\A^{\prime}=\{A^{\prime}_1, \ldots, A^{\prime}_m\}$ (in group $\G_2$) are \emph{equivalent} if there is an isomorphism $\alpha$  between $\G_1$ and $\G_2$ and elements $g,h \in \G_2$ such that, for all $1 \leq i \leq m$, $A^{\prime}_i= h(\alpha(A_i))g$.
\end{itemize}
\end{definition}

Propositions \ref{translate} and \ref{auto} guarantee that, if $A$ is an  $(n,m,k,\lambda)$-EDF (respectively, SEDF) and $A^{\prime}$ is equivalent to $A$, then $A^{\prime}$ is also an  $(n,m,k,\lambda)$-EDF (respectively, SEDF).  This definition can be extended to GSEDFs (and other EDF-like structures) in the natural way.

\begin{example}\label{equiv:ex}
\begin{itemize}
\item[(i)] Let $\G=(\mathbb{Z}_{5}, +)$.  The $(5,2,2,1)$-SEDF $(\{0,1\},\{2,4\})$ is equivalent to the SEDF $(\{1,4\}, \{2,3\})$ by translating by $2$, and to the SEDF $(\{0,2\}, \{3,4\})$ by applying the automorphism $x \mapsto 2x$.
\item[(ii)]  Let $\G=(\mathbb{Z}_{17}, +)$.  The $(17,2,4,1)$-SEDF $(\{0,1,4,5\},\{6,8,14,16\})$ is equivalent to the SEDF  $(\{1,4,13,16\},\{2,8,9,15\})$  by applying the automorphism $x \mapsto 3x$ then translating by $1$.
\end{itemize}
\end{example}

\section{Non-existence and existence in general groups}

The following theorem expresses necessary relationships between the parameters of an EDF or SEDF.  Although originally established (\cite{OgKuStSi},\cite{PaSt}) in the context of abelian groups, the conditions remain valid in any finite group.

\begin{proposition}\label{par_rel}
\begin{itemize}
\item[(1)]
Necessary conditions for the existence of an $(n,m,k,\lambda)$-EDF  in a group $\G$ of order $n$ are
\begin{itemize}
\item[(i)]  $m \geq 2$;
\item[(ii)] $n \geq mk$;
\item[(iii)] $ \lambda(n-1)=k^2 m(m-1)$.
\end{itemize}
\item[(2)]Necessary conditions for the existence of an $(n,m,k,\lambda)$-SEDF in a group $\G$ of order $n$ are
\begin{itemize}
\item[(i)] $m \geq 2$;
\item[(ii)] $n \geq mk$;
\item[(iii)] $ \lambda(n-1)=k^2 (m-1)$.
\end{itemize}
\end{itemize}
Parameter sets that satisfy these conditions will be called \emph{admissible}.
\end{proposition}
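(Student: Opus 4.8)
The plan is to prove each of the three conditions in parts (1) and (2) by a direct counting argument on the multiset of external differences, exploiting the fact that the definitions are phrased in terms of ordered pairs and therefore transfer verbatim to non-abelian groups.

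First I would dispose of condition (i) in both parts: it is built into the definition ($m \geq 2$ is required for the notion to make sense), so nothing is to be proved. For condition (ii), the sets $A_1, \ldots, A_m$ are by definition pairwise disjoint $k$-subsets (or $k_i$-subsets in the GSEDF case, but here all equal to $k$) of a group of order $n$, so $\bigcup_i A_i$ has exactly $mk$ elements and hence $n \geq mk$. Again this is essentially immediate from the definition.

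The substance is in condition (iii), which I would obtain by counting the size of the relevant difference multiset in two ways. For the EDF case, consider $M = \{ xy^{-1} : x \in A_i,\, y \in A_j,\, i \neq j\}$. On one hand, the number of ordered pairs $(x,y)$ with $x \in A_i$, $y \in A_j$, $i \neq j$ is $\sum_{i \neq j} |A_i||A_j| = k^2 m(m-1)$, since there are $m(m-1)$ ordered pairs of distinct indices. Crucially, every such difference $xy^{-1}$ is a \emph{non-identity} element: if $xy^{-1} = e$ then $x = y$, contradicting $A_i \cap A_j = \emptyset$ for $i \neq j$. On the other hand, by the EDF property $M$ contains each of the $n-1$ non-identity elements exactly $\lambda$ times, so $|M| = \lambda(n-1)$. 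Equating the two counts gives $\lambda(n-1) = k^2 m(m-1)$. The SEDF case is identical except we fix $i$ and count pairs $(x,y)$ with $x \in A_i$ and $y \in \cup_{j \neq i} A_j$: there are $k \cdot (m-1)k = k^2(m-1)$ such pairs, all yielding non-identity differences by disjointness, and the SEDF property forces $|M_i| = \lambda(n-1)$, whence $\lambda(n-1) = k^2(m-1)$. (As a sanity check, summing the $m$ multisets $M_i$ recovers the EDF multiset $M$ and the relation $\lambda(n-1)=k^2m(m-1)$ with $m\lambda$ in place of $\lambda$, consistent with the parenthetical remark in the SEDF definition.)

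I do not expect any genuine obstacle here: the only point requiring a moment's care is the observation that disjointness of the blocks guarantees no difference equals the identity, which is exactly what makes the count land on $n-1$ rather than $n$. The argument uses nothing about commutativity — the differences $xy^{-1}$ are defined as ordered products, and $|A_i||A_j|$ counts ordered pairs regardless of the group structure — so the abelian proofs of \cite{OgKuStSi} and \cite{PaSt} carry over unchanged, which is all that needs to be remarked.
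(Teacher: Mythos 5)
Your proof is correct and follows exactly the route of the paper's own (much terser) proof: condition (i) is definitional, (ii) follows from disjointness of the $m$ $k$-sets, and (iii) is the same double-count of ordered pairs giving external differences, with the observation that commutativity is never used. Your added remark that disjointness forces every difference to be a non-identity element is the right point of care and is implicit in the paper's sketch.
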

\begin{proof}
In each case, (i) is in the definition, while (ii) is due to the fact that the $m$ $k$-sets are disjoint.  Part (iii) follows from double-counting the ordered pairs which correspond to the external differences; this does not assume commutativity of the group operation.
\end{proof}

Analysis of the conditions of Proposition \ref{par_rel} can be used to rule-out certain sets of parameters for SEDFs.  Since the analysis is purely number theoretical on the parameter equation, these results apply equally to abelian and non-abelian groups.  The following results from the literature hold for any group of order $n$.
\begin{lemma}\label{lambdalem}
\begin{itemize}
\item For an $(n,m,k,\lambda)$-SEDF, $k=1$ and $\lambda=1$; or $k>1$ and $\lambda<k$ (\cite{HuPa});
\item If $\mathrm{gcd}(k,n-1)=1$, then a non-trivial $(n,m,k,\lambda)$-SEDF does not exist (\cite{JeLi}).
\end{itemize}
\end{lemma}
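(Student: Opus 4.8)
The plan is to derive both statements purely from the parameter relations already secured in Proposition \ref{par_rel}(2), namely the equation $\lambda(n-1)=k^2(m-1)$ together with $m\geq 2$ and $n\geq mk$. Since these hold verbatim in any finite group (the counting identity double-counts ordered pairs and the inequality expresses disjointness, neither using commutativity), the number-theoretic arguments of \cite{HuPa} and \cite{JeLi} transfer unchanged; the only thing to verify is that nothing in them secretly used abelianness, and Proposition \ref{par_rel} has done exactly that.

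For the first bullet I would split on the value of $k$. If $k=1$, the parameter equation becomes $\lambda(n-1)=m-1$; combining this with $n\geq mk=m$ (so $n-1\geq m-1$) forces $\lambda\leq 1$, while $m\geq 2$ gives $m-1\geq 1$ and hence $\lambda\geq 1$, so $\lambda=1$. If $k>1$, suppose for contradiction that $\lambda\geq k$. Then $n-1=k^2(m-1)/\lambda\leq k(m-1)$, so $n\leq km-k+1$; but $n\geq mk$ then yields $k\leq 1$, a contradiction. Hence $\lambda<k$ whenever $k>1$.

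For the second bullet, assume $\gcd(k,n-1)=1$, so also $\gcd(k^2,n-1)=1$. Reading the parameter equation as $k^2\mid\lambda(n-1)$ and using this coprimality gives $k^2\mid\lambda$, whence $\lambda\geq k^2$ (note $\lambda\geq 1$ since $m\geq 2$). For a non-trivial SEDF we have $k>1$, so $\lambda\geq k^2>k$, which contradicts the first bullet. Therefore no non-trivial $(n,m,k,\lambda)$-SEDF exists under this coprimality hypothesis.

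There is no genuine obstacle here: the entire substance is the observation — already made in Proposition \ref{par_rel} — that the defining counting identity and the disjointness bound are group-theoretic rather than abelian-specific, so the existing deductions apply in full generality. The only place requiring a moment's care is the edge case $k=1$ in the first bullet, where one must invoke $m\geq 2$ to exclude $\lambda=0$; and one should state explicitly what ``non-trivial'' means (i.e. $k>1$) so that the appeal to the first bullet in the proof of the second is unambiguous.
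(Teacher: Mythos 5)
Your proposal is correct and takes essentially the same approach as the paper: the paper states this lemma without proof, citing \cite{HuPa} and \cite{JeLi}, and justifies its validity in arbitrary finite groups exactly as you do, namely that both statements are purely number-theoretic consequences of the parameter relations in Proposition \ref{par_rel}, which hold without commutativity. Your explicit derivations (the $k=1$ and $k>1$ cases for the first bullet, and the $k^2\mid\lambda$ contradiction with $\lambda<k$ for the second) are valid and simply supply the arithmetic that the paper delegates to the cited works.
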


The following new result significantly restricts the orders of SEDF-containing groups.  
An integer is said to be \emph{square-free} if it is divisible by no square other than $1$.
\begin{proposition}\label{square-free}
Let $n-1$ be square-free.  Then there exists no non-trivial $(n,m,k,\lambda)$-SEDF.
\end{proposition}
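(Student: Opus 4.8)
The plan is to combine the Handbook-style parameter equation from Proposition~\ref{par_rel}(2)(iii) with the multiplicative divisibility constraint from Lemma~\ref{lambdalem}. For a non-trivial SEDF we have $k>1$ and the necessary condition $\lambda(n-1)=k^2(m-1)$. The idea is to show that if $n-1$ is square-free, then this equation together with $\lambda<k$ forces a contradiction, except in the trivial case.

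First I would analyse the prime factorisation. Let $p$ be any prime dividing $k$. Then $p^2 \mid k^2 \mid \lambda(n-1)$. Since $n-1$ is square-free, $p^2 \nmid (n-1)$, so at least one factor of $p$ must divide $\lambda$; more carefully, writing $v_p$ for the $p$-adic valuation, $2 v_p(k) = v_p(\lambda) + v_p(n-1) \le v_p(\lambda) + 1$, hence $v_p(\lambda) \ge 2v_p(k) - 1 \ge v_p(k)$ (using $v_p(k)\ge 1$). Running this over all primes $p \mid k$ yields $k \mid \lambda$. But Lemma~\ref{lambdalem} says that for a non-trivial SEDF we have $\lambda < k$, so $k \mid \lambda$ is impossible unless $\lambda = 0$, which cannot occur. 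Hence no non-trivial SEDF exists when $n-1$ is square-free.

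The one point that needs a little care is the inequality $2v_p(k)-1 \ge v_p(k)$: this holds precisely because $v_p(k) \ge 1$ for every prime $p$ dividing $k$, so the case $k=1$ (where the product over primes is empty) is exactly the trivial SEDF that the statement excludes. I would also remark that this argument is purely number-theoretic on the parameter equation and therefore applies verbatim to abelian and non-abelian groups alike, consistent with the preceding discussion.

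The main obstacle, such as it is, is simply making sure the valuation bookkeeping is airtight and that one correctly invokes $\lambda < k$ rather than $\lambda \le k$; the combinatorics does all its work through the single equation $\lambda(n-1) = k^2(m-1)$, so no further structural input about the sets $A_i$ is needed.
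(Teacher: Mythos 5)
Your proof is correct and takes essentially the same route as the paper: deduce $k \mid \lambda$ from the parameter equation $\lambda(n-1)=k^2(m-1)$ together with the square-freeness of $n-1$, then contradict the bound $\lambda<k$ of Lemma~\ref{lambdalem}, leaving only the trivial case $k=\lambda=1$. One small slip: the displayed relation $2v_p(k)=v_p(\lambda)+v_p(n-1)$ omits the factor $m-1$ and should read $v_p(\lambda)+v_p(n-1)=2v_p(k)+v_p(m-1)\ge 2v_p(k)$, but since you only use the resulting inequality $v_p(\lambda)\ge 2v_p(k)-1\ge v_p(k)$, the argument is unaffected.
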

\begin{proof}
Let $n-1$ be square-free.  Suppose there exists an $(n,m,k,\lambda)$-SEDF.
By Proposition \ref{par_rel}(iii), we have $\lambda(n-1)=k^2(m-1)$, which rearranges to $n-1=\frac{ k^2(m-1)}{\lambda}$.  Clearly the RHS must be an integer as $n-1$ is, ie $k^2(m-1)$ is divisible by $\lambda$. Moreover, $n-1$ is square-free.  If $p^{\alpha}$ is a prime power divisor of $k$, then it must be a divisor of $\lambda$ (otherwise $\frac{k^2(m-1)}{\lambda}$ would be divisible by $p^2$).  So $\lambda=ak$ for some positive integer $a$, i.e. $k$ divides $\lambda$.   But by Lemma \ref{lambdalem}, for any SEDF we must have either $k=1$ and $\lambda=1$ or $k>1$ and $\lambda<k$.  Since $k \mid \lambda$, we have $k \leq \lambda$ and so it is not possible to have $k>1$. So $k=1$ and $\lambda=1$, ie $n=m$, and the SEDF is trivial.
\end{proof}

We prove a partial converse of Proposition \ref{square-free}.

\begin{proposition}\label{squareful}
Let $n (>2)$ be such that $n-1$ is not square-free.  Then there exists a set of admissible parameters $(n,m,k,\lambda)$ for a non-trivial SEDF.
\end{proposition}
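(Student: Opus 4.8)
The plan is to produce, for each admissible $n$, an explicit parameter quadruple with $k>1$ and then verify it against the three conditions of Proposition~\ref{par_rel}(2); this suffices, since the claim only asks for \emph{admissible} parameters, not for an actual SEDF. Since $n-1$ is not square-free, I would first fix a prime $p$ and a positive integer $t$ with $n-1 = p^2 t$. Then I would propose the parameters $(n,m,k,\lambda) = (p^2 t + 1,\, t+1,\, p,\, 1)$.

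Conditions (i) and (iii) are then immediate: $m = t+1 \geq 2$ because $t \geq 1$, and $\lambda(n-1) = p^2 t = p^2(m-1) = k^2(m-1)$. Non-triviality is clear, since $k = p \geq 2 > 1$; note also $\lambda = 1 < p = k$, consistent with Lemma~\ref{lambdalem}. (As a sanity check, the smallest case $n-1 = 4$ gives the parameter set $(5,2,2,1)$, which is indeed realised, e.g.\ by Example~\ref{equiv:ex}(i).)

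The only part needing a short computation — and the single, mild obstacle — is condition (ii), namely $n \geq mk$. Here $mk = (t+1)p = tp + p$ while $n = p^2 t + 1$, so $n - mk = p^2 t + 1 - tp - p = tp(p-1) - (p-1) = (p-1)(tp - 1)$, which is nonnegative since $p \geq 2$ and $t \geq 1$. This establishes that $(p^2 t + 1, t+1, p, 1)$ is admissible for a non-trivial SEDF. If desired, I would also remark that taking $\lambda = j$ and $m = jt+1$ for any $1 \leq j < p$ likewise yields admissible parameter sets, so in fact a small family of witnesses exists, but one suffices for the statement.
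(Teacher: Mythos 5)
Your proposal is correct and follows essentially the same route as the paper: both set $\lambda=1$, take $k$ to be an integer whose square divides $n-1$ (you pick a prime $p$, the paper picks the largest $a$ with $a^2\mid n-1$), set $m-1=(n-1)/k^2$, and then verify the only nontrivial condition $n\geq mk$ by a short factorisation argument. The choice of prime versus largest square divisor is immaterial to the argument, so the two proofs are essentially identical.
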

\begin{proof}
Write $n-1=a^2 \cdot b$, where $a^2$ is the largest square dividing $n-1$ and $b \geq 1$. By definition, $a>1$. Take the tuple of parameters $(n,m,k,\lambda)=(n, b+1, a, 1)$: we will  show that this satisfies all three conditions of Proposition  \ref{par_rel} (2), and the non-trivial condition $k>1$.  Taking $\lambda=1$ reduces the parameter equation from Proposition \ref{par_rel} (2)(iii) to $n-1=k^2(m-1)$, ie $a^2b=k^2(m-1)$.  We may set $a=k$ and $b=m-1$ to obtain results which satisfy this equation.  From above $k=a>1$.  Since $b \geq 1$, $m \geq 2$. We must check that $n \geq km$; in fact we prove that $n-1=a^2 b \geq km= a(b+1)$.  We have
$$ a^2b \geq a(b+1) \Leftrightarrow ab \geq b+1  \Leftrightarrow b(a-1)  \geq 1$$
which holds since $a \geq 2$ and $b \geq 1$.
\end{proof}

For a valid group order $n$, there may of course be other sets of admissible parameters for an $(n,m,k, \lambda)$-SEDF, beyond those in the above proof.   Table \ref{table1} contains all admissible SEDF parameter sets for $n$ up to $64$. These were found computationally. 

	\begin{table}[h]
		\centering
		\begin{tabular}{cccc|cccc|cccc|cccc|cccc}
		$n$    & $m$   & $k$    & $\lambda$ & 
		$n$    & $m$    & $k$    & $\lambda$ & 
		$n$    & $m$    & $k$    & $\lambda$ & 
		$n$    & $m$    & $k$    & $\lambda$ & 
		$n$     & $m$    & $k$    & $\lambda$  \\ 
		\hline
		5  & 2 & 2  & 1 & 26 & 3 & 5  & 2 & 37 & 5  & 3  & 1  & 49 & 10 & 4  & 3  & 55                   & 3                    & 9                    & 3                     \\ 
		\cline{1-4}
		9  & 2 & 4  & 2 & 26 & 4 & 5  & 3 & 37 & 5  & 6  & 4  & 49 & 13 & 2  & 1  & 55                   & 3                    & 18                   & 12                    \\ 
		\cline{13-16}
		9  & 3 & 2  & 1 & 26 & 5 & 5  & 4 & 37 & 6  & 6  & 5  & 50 & 2  & 7  & 1  & 55                   & 4                    & 6                    & 2                     \\ 
		\cline{1-8}
		10 & 2 & 3  & 1 & 28 & 2 & 9  & 3 & 37 & 9  & 3  & 2  & 50 & 2  & 14 & 4  & 55                   & 4                    & 12                   & 8                     \\
		10 & 3 & 3  & 2 & 28 & 3 & 9  & 6 & 37 & 10 & 2  & 1  & 50 & 2  & 21 & 9  & 55                   & 5                    & 9                    & 6                     \\ 
		\cline{1-4}\cline{9-12}
		13 & 2 & 6  & 3 & 28 & 4 & 3  & 1 & 41 & 2  & 20 & 10 & 50 & 3  & 7  & 2  & 55                   & 7                    & 3                    & 1                     \\
		13 & 4 & 2  & 1 & 28 & 4 & 6  & 4 & 41 & 3  & 10 & 5  & 50 & 3  & 14 & 8  & 55                   & 7                    & 6                    & 4                     \\ 
		\cline{1-4}
		17 & 2 & 4  & 1 & 28 & 7 & 3  & 2 & 41 & 6  & 4  & 2  & 50 & 4  & 7  & 3  & 55                   & 13                   & 3                    & 2                     \\ 
		\cline{5-8}\cline{17-20}
		17 & 2 & 8  & 4 & 29 & 2 & 14 & 7 & 41 & 11 & 2  & 1  & 50 & 5  & 7  & 4  & 57                   & 2                    & 28                   & 14                    \\ 
		\cline{9-12}
		17 & 3 & 4  & 2 & 29 & 8 & 2  & 1 & 45 & 2  & 22 & 11 & 50 & 6  & 7  & 5  & 57                   & 3                    & 14                   & 7                     \\ 
		\cline{5-8}
		17 & 4 & 4  & 3 & 33 & 2 & 8  & 2 & 45 & 12 & 2  & 1  & 50 & 7  & 7  & 6  & 57                   & 8                    & 4                    & 2                     \\ 
		\cline{9-12}\cline{13-16}
		17 & 5 & 2  & 1 & 33 & 2 & 16 & 8 & 46 & 2  & 15 & 5  & 51 & 2  & 10 & 2  & 57                   & 15                   & 2                    & 1                     \\ 
		\cline{1-4}\cline{17-20}
		19 & 2 & 6  & 2 & 33 & 3 & 4  & 1 & 46 & 3  & 15 & 10 & 51 & 2  & 20 & 8  & 61                   & 2                    & 30                   & 15                    \\
		19 & 3 & 3  & 1 & 33 & 3 & 8  & 4 & 46 & 6  & 3  & 1  & 51 & 3  & 5  & 1  & 61                   & 4                    & 10                   & 5                     \\
		19 & 3 & 6  & 4 & 33 & 4 & 8  & 6 & 46 & 6  & 6  & 4  & 51 & 3  & 10 & 4  & 61                   & 6                    & 6                    & 3                     \\
		19 & 5 & 3  & 2 & 33 & 5 & 4  & 2 & 46 & 11 & 3  & 2  & 51 & 3  & 15 & 9  & 61                   & 16                   & 2                    & 1                     \\ 
		\cline{1-4}\cline{9-12}\cline{17-20}
		21 & 2 & 10 & 5 & 33 & 7 & 4  & 3 & 49 & 2  & 12 & 3  & 51 & 4  & 10 & 6  & 64                   & 2                    & 21                   & 7                     \\
		21 & 6 & 2  & 1 & 33 & 9 & 2  & 1 & 49 & 2  & 24 & 12 & 51 & 5  & 5  & 2  & 64                   & 3                    & 21                   & 14                    \\ 
		\cline{1-8}
		25 & 2 & 12 & 6 & 37 & 2 & 6  & 1 & 49 & 3  & 12 & 6  & 51 & 5  & 10 & 8  & 64                   & 8                    & 3                    & 1                     \\
		25 & 3 & 6  & 3 & 37 & 2 & 12 & 4 & 49 & 4  & 4  & 1  & 51 & 7  & 5  & 3  & 64                   & 8                    & 6                    & 4                     \\
		25 & 4 & 4  & 2 & 37 & 2 & 18 & 9 & 49 & 4  & 8  & 4  & 51 & 9  & 5  & 4  & 64                   & 15                   & 3                    & 2                     \\ 
		\cline{13-16}\cline{17-20}
		25 & 7 & 2  & 1 & 37 & 3 & 6  & 2 & 49 & 4  & 12 & 9  & 53 & 2  & 26 & 13 & \multicolumn{1}{l}{} & \multicolumn{1}{l}{} & \multicolumn{1}{l}{} & \multicolumn{1}{l}{}  \\ 
		\cline{1-4}
		26 & 2 & 5  & 1 & 37 & 3 & 12 & 8 & 49 & 5  & 6  & 3  & 53 & 14 & 2  & 1  & \multicolumn{1}{l}{} & \multicolumn{1}{l}{} & \multicolumn{1}{l}{} & \multicolumn{1}{l}{}  \\ 
		\cline{13-16}
		26 & 2 & 10 & 4 & 37 & 4 & 6  & 3 & 49 & 7  & 4  & 2  & 55 & 2  & 18 & 6  & \multicolumn{1}{l}{} & \multicolumn{1}{l}{} & \multicolumn{1}{l}{} & \multicolumn{1}{l}{}
		\end{tabular}
		\caption{All admissible SEDF parameter sets for orders 1 to 64}
\label{table1}
	\end{table}

It is not the case that an SEDF will necessarily exist for all admissible parameters. Some parameter sets can be ruled-out in certain classes of group using combinatorial or algebraic arguments.  The following result summarises non-existence results for SEDFs in abelian groups:
\begin{proposition}\label{nonexistence}
A non-trivial abelian $(n,m,k,\lambda)$-SEDF does not exist in each of the following cases:
\begin{itemize}
\item[(1)] $m \in \{3,4\}$ (\cite{MaSt});
\item[(2)] $m>2$ and $n$ prime (\cite{MaSt});
\item[(3)] $m>2$ and $n=pq$ for distinct primes $p,q$ (\cite{BaJiWeZh});
\item[(4)] $m>2$ and $\lambda=2$ (\cite{HuPa});
\item[(5)] $m>2$ and $\lambda>1$ and $\frac{\lambda(k-1)(m-2)}{(\lambda-1)k(m-1)}>1$ (\cite{HuPa});
\item[(6)] $m>2$ and there is a prime $p$ dividing $n$ such that $\mathrm{gcd}(km,p)=1$ and $m$ is not congruent to $2$ mod $p$  (\cite{BaJiWeZh});
\item [(7)] $m>2$ and $n$ a prime power, when $\G$ is cyclic (\cite{BaJiWeZh} and \cite{LeLiPr});
\item[(8)] $m>2$ and $n$ is the product of at most three (not necessarily distinct) primes, except possibly when $\G=C_p^3$ and $p$ is a prime greater than $3 \times 10^{12}$ (\cite{LeLiPr}).
\end{itemize}
\end{proposition}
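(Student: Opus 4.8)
The plan is to treat the proposition as a compendium: each item is to be established via the source cited alongside it, so the real work is to organise the common machinery and then indicate which specialisation each case needs. The unifying device is the integral group ring $\mathbb{Z}[\G]$. Writing $T=\sum_{i}A_i$ and $B_i=T-A_i$, the SEDF conditions are equivalent to $A_iB_i^{(-1)}=\lambda(\G-1)$ in $\mathbb{Z}[\G]$ for every $i$ (here $\G$ abbreviates $\sum_{g\in\G}g$ and $1$ the identity, and $A_i\cap B_i=\emptyset$ is why no identity terms appear). Applying a non-principal character $\chi$ of the abelian group $\G$ and using $\chi(\G)=0$ gives $\chi(A_i)\,\overline{\chi(T)-\chi(A_i)}=-\lambda$; summing over $i$ and using $\sum_i\chi(A_i)=\chi(T)$ yields the master relations $\sum_i|\chi(A_i)|^2=|\chi(T)|^2+m\lambda$ and $\chi(A_i)\overline{\chi(T)}=|\chi(A_i)|^2-\lambda$ for each $i$. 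Every non-existence statement below is to be extracted from these relations, or from the combinatorial counts underlying them.

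For the items that do not restrict $n$, namely (1), (4) and (5), I would argue directly. Item (5) is a pure counting inequality of \cite{HuPa}: one bounds the number of ``internal'' coincidences forced by the external-difference equations against the number available, and the displayed quantity is precisely the obstruction. For (1) and (4) I would run, for each admissible residue pattern of the nonnegative integers $|\chi(A_i)|^2$, the short but delicate case analysis of \cite{MaSt} and \cite{HuPa}: with only $m\in\{3,4\}$ sets, or with $\lambda=2$ and $m>2$, the master relations have no solution unless $k=1$.

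For the items that control the \emph{order} $n$, namely (2), (3), (6), (7) and (8), the plan is to combine the character equations with the structure of the abelian group: restricting to, or projecting onto, Sylow subgroups and cyclic quotients, and invoking multiplier-type and ring-theoretic arguments on the resulting sub- and quotient structures. Items (2) ($n$ prime), (3) ($n=pq$) and (6) (some prime $p\mid n$ with $\gcd(km,p)=1$ and $m\not\equiv 2\bmod p$) need comparatively short such arguments (\cite{MaSt}, \cite{BaJiWeZh}); items (7) ($n$ a prime power, $\G$ cyclic) and (8) ($n$ a product of at most three primes) require the much deeper analysis of \cite{LeLiPr}, building on \cite{BaJiWeZh}, in which a detailed study of the possible character values over $p$-groups eliminates every case except the single family $\G=C_p^3$, which survives only under the lower bound $p>3\times10^{12}$.

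I expect the main obstacle to be item (8), and within it the subcase $\G=C_p^3$: there the master relations alone are nowhere near enough, and one must push number-theoretic estimates on the algebraic integers $\chi(A_i)$ --- bounds on norms of the relevant ideals, Gauss-sum and Weil-type inequalities --- to force a contradiction, which only succeeds once $p$ is large, leaving exactly the residual gap recorded in the statement.
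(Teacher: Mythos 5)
This proposition is a survey of known results and the paper supplies no proof of its own beyond the citations attached to each item, so your plan --- setting up the standard group-ring/character ``master relations'' and deferring each case to its cited source, with the heavy lifting for (7)--(8) left to \cite{LeLiPr} --- is essentially the same approach and is sound (your derivation of $\chi(A_i)\overline{\chi(T)}=|\chi(A_i)|^2-\lambda$ and $\sum_i|\chi(A_i)|^2=|\chi(T)|^2+m\lambda$ is exactly the machinery used in \cite{MaSt} and \cite{BaJiWeZh}). One small correction of attribution rather than of substance: the paper explicitly notes that items (4) and (5) from \cite{HuPa} rest on direct combinatorial counting that uses commutativity, not on character theory --- a distinction the paper cares about, since it is the reason these forbidden parameters might still be realisable in non-abelian groups --- so folding (4) into the character-theoretic case analysis misdescribes the cited proof, though it does not affect the correctness of the compendium.
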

Of these, (4) and (5) employ direct combinatorial arguments which rely on the commutativity of the operation, while the others use character theory in abelian groups.   It is therefore possible that SEDFs with these forbidden parameters may exist in non-abelian groups.  

The following is a summary of the main constructive existence results known for abelian groups.  All known constructions for families of SEDFs have $m=2$.

\begin{proposition} \label{SEDFexistence}
An $(n,m,k, \lambda)$-SEDF exists in the group $\G$ in the following cases:
\begin{itemize}
\item[(1)] $(n,m,k,\lambda)=(k^2+1, 2, k, 1)$ and $\G=\mathbb{Z}_{k^2+1}$ (\cite{PaSt}); sets given by $\{0,1,\ldots,k-1 \}, \{k,2k,\ldots,k^2\}$.  
\item[(2)] $(n,m,k,\lambda)=(n, 2, \frac{n-1}{2}, \frac{n-1}{4})$ and $n$ is congruent to $1$ mod $4$, provided there exists an $(n,\frac{n-1}{2},\frac{n-5}{4},\frac{n-1}{4})$ partial difference set in $\G$ (\cite{HuPa}). 
 In particular, when $n$ is a prime power, appropriate sets are given by the non-zero squares and non-squares in the multiplicative group of the finite field of order $n$.
\item[(3)] $(n,m,k,\lambda)=(q,2,\frac{q-1}{4},\frac{q-1}{16})$, where $q=16 t^2+1$ is a prime power and $t$ an integer (\cite{BaJiWeZh}); the sets are cyclotomic classes in the finite field of order $q$.
\item[(4)] $(n,m,k,\lambda)=(p,2,\frac{p-1}{6},\frac{p-1}{36})$, where $p=108 t^2+1$ is a prime and $t$ an integer  (\cite{BaJiWeZh}); the sets are cyclotomic classes in the finite field of order $p$.
\end{itemize}
\end{proposition}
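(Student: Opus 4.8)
The plan is to treat the four families separately after one common reduction. Since $m=2$ and $\G$ is abelian in every case, the second difference multiset satisfies $M_2=\{x-y:x\in A_2,\,y\in A_1\}=-M_1$, and the multiset ``each non-identity element with multiplicity $\lambda$'' is invariant under $g\mapsto -g$; so it suffices, for the sets listed (disjointness and the cardinalities being immediate from the descriptions), to check that $M_1=\{x-y:x\in A_1,\ y\in A_2\}$ meets every non-identity element exactly $\lambda$ times. For (1), in $\mathbb Z_{k^2+1}$ with $A_1=\{0,\dots,k-1\}$ and $A_2=\{k,2k,\dots,k^2\}$, I would first work with honest integers: for fixed $j$, the numbers $jk-i$ ($0\le i\le k-1$) fill the block $\{(j-1)k+1,\dots,jk\}$, and as $j$ runs over $1,\dots,k$ these blocks tile $\{1,\dots,k^2\}$; hence the integers $i-jk$ run without repetition through $-1,-2,\dots,-k^2$, which reduced modulo $k^2+1$ is $\{1,\dots,k^2\}$, i.e.\ every non-identity element once, so $\lambda=1$.

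For (2), let $D$ be the given partial difference set, with $0\notin D$, $|D|=\tfrac{n-1}{2}$, and intersection numbers $\lambda_D=\tfrac{n-5}{4}$ (on $D\setminus\{0\}$) and $\mu_D=\tfrac{n-1}{4}$ (off $D\cup\{0\}$); put $A_1=D$ and $A_2=\G\setminus(D\cup\{0\})$, so $A_1,A_2$ are disjoint and $|A_2|=n-1-\tfrac{n-1}{2}=\tfrac{n-1}{2}=k$. For a non-identity $\delta$, each $x\in A_1$ gives the unique pair $(x,x-\delta)$, which lies in $M_1$ precisely when $x-\delta\in A_2$, i.e.\ when $x-\delta\notin D$ and $x-\delta\ne 0$; so (using $0\notin D$) the multiplicity of $\delta$ in $M_1$ is $|D|-\#\{x\in D:x-\delta\in D\}-[\delta\in D]$. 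Since $\delta\ne 0$ forces $x\ne x-\delta$, the middle count is exactly the partial-difference-set count for $\delta$, equal to $\lambda_D$ if $\delta\in D$ and to $\mu_D$ otherwise; this gives $\tfrac{n-1}{2}-\lambda_D-1=\tfrac{n-1}{4}$ when $\delta\in D$ and $\tfrac{n-1}{2}-\mu_D=\tfrac{n-1}{4}$ when $\delta\notin D$ — the same value, because $\mu_D-\lambda_D=1$. For $n=q$ a prime power, the non-zero squares of $\mathbb F_q$ ($q\equiv1\bmod 4$) form a partial difference set with exactly these parameters, yielding the squares/non-squares construction.

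For (3) and (4) I would use cyclotomy in $(\mathbb F_q,+)$ (resp.\ $(\mathbb F_p,+)$) with $e=4$ (resp.\ $e=6$): the index-$e$ cyclotomic classes $C_0,\dots,C_{e-1}$ each have size $\tfrac{q-1}{4}$ (resp.\ $\tfrac{p-1}{6}$), matching $k$, and I would take $A_1=C_0$ and $A_2=C_{e/2}$ (disjoint, of size $k$). For a non-identity $z\in C_r$, dividing $z=x-y$ through by $z$ shows the multiplicity of $z$ in $M_1$ equals the cyclotomic number $(\tfrac{e}{2}-r,-r)_e$; the standard symmetry relations (available since $-1\in C_0$ here) reduce these to only $e/2$ distinct values as $r$ ranges over $\mathbb Z_e$, and the SEDF condition is precisely that all $e/2$ of them coincide — after which their common value is forced to be $(q-1)/e^2$ (resp.\ $(p-1)/e^2$), since the $k^2$ differences then spread evenly over the non-identity elements. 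To conclude I would invoke the classical closed forms for the cyclotomic numbers: for $e=4$, writing $q=x^2+4y^2$ with $x\equiv1\bmod 4$, the hypothesis $q=16t^2+1=1^2+4(2t)^2$ forces $x=1$, and substitution makes $(0,2)_4=(1,3)_4=t^2=(q-1)/16$; for $e=6$, the hypothesis $p=108t^2+1$ yields both $p=1^2+3(6t)^2$ and $4p=2^2+27(4t)^2$, forcing the cubic and sextic defining parameters down to $\pm1$ and $\pm2$, whereupon the three relevant order-$6$ cyclotomic numbers all collapse to $3t^2=(p-1)/36$.

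Cases (1) and (2) are routine counting. The genuine obstacle is (3)--(4): one must marshal the explicit, multi-case formulas for cyclotomic numbers of orders $4$ and $6$ — together with the sign conventions tying them to the chosen primitive root and to the quadratic-form data $q=x^2+4y^2$, $p=A^2+3B^2$, $4p=L^2+27M^2$ — and check that, once $q$ and $p$ are forced into the shapes $1+16t^2$ and $1+108t^2$, every class-dependent multiplicity in $M_1$ becomes one and the same constant. That uniformity is a real cancellation special to these quadratic forms, and it is exactly why the constructions demand $q$, $p$ of these shapes rather than arbitrary prime powers in the appropriate residue classes.
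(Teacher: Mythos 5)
The paper never proves Proposition \ref{SEDFexistence}: it is presented as a survey of known constructions, with the proofs delegated to the cited sources \cite{PaSt}, \cite{HuPa}, \cite{BaJiWeZh}. So there is no ``paper proof'' to match; judged on its own, your argument is essentially correct and follows the same lines as those sources. The opening reduction is valid (for $m=2$ in an abelian group, $M_2=-M_1$ and the target multiset is negation-invariant), the tiling argument for (1) is exactly the standard verification, and your count for (2) is a complete proof once one notes the partial difference set is meant to be regular ($0\notin D$), as in \cite{HuPa} and Proposition \ref{nearcomplete}; the Paley PDS of nonzero squares then gives the squares/non-squares instance. For (3) and (4) your plan coincides with the route of \cite{BaJiWeZh}: the choice $A_2=C_{e/2}$ agrees with the paper's own data (e.g. $\{2,8,9,15\}=C_2$ in $\mathbb{F}_{17}$, Table \ref{table5}; the pair $(C_0,C_1)$ would fail already for $q=17$), the multiplicity of $z\in C_r$ in $C_0-C_{e/2}$ is indeed $(\tfrac{e}{2}-r,\,-r)_e$, and since $f=(q-1)/e$ is even ($4t^2$, resp.\ $18t^2$) the symmetry relations cut the check down to $(0,2)_4,(1,3)_4$ (resp.\ $(0,3)_6,(1,4)_6,(2,5)_6$), which do evaluate to $(q-1)/16$ (resp.\ $3t^2=(p-1)/36$) once the representations force $x=1$ (resp.\ $A=1$, $L=\pm2$). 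What you leave unchecked is precisely the classical closed-form tables for cyclotomic numbers of orders $4$ and $6$, together with the essential uniqueness of the representations $q=x^2+4y^2$ and $p=A^2+3B^2$, $4p=L^2+27M^2$ needed to pin down those parameters; invoking these classical facts is legitimate (and is no more than the paper itself does by citing the construction), but a self-contained write-up of (3)--(4) would have to reproduce or reference them explicitly, including the prime-power (not just prime) case in (3).
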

Note that different constructions may produce equivalent SEDFs for certain specific parameter sets.  For example, when $n=5$,  Example \ref{equiv:ex} shows that Proposition \ref{SEDFexistence} (1) and (2) yield equivalent SEDFs, although this is not the case for larger values of $n$.

For $\lambda=1$, a parameter characterization for the abelian case is given in \cite{PaSt}:
\begin{proposition}\label{PaStlambda=1}
 There exists an abelian $(n,m,k,1)$-SEDF if and only if $m=2$ and $n=k^2+1$ or $k=1$ and $m=n$.
\end{proposition}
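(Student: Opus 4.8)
The plan is to treat the two directions separately; the reverse implication is immediate, and all the content lies in the forward one. For the reverse implication: if $m=2$ and $n=k^2+1$ then the explicit sets of Proposition~\ref{SEDFexistence}(1) form an abelian $(k^2+1,2,k,1)$-SEDF, while if $k=1$ and $m=n$ then Example~\ref{singletons} supplies the trivial $(n,n,1,1)$-SEDF. For the forward implication, suppose $\A=\{A_1,\dots,A_m\}$ is an abelian $(n,m,k,1)$-SEDF over $\G$. Proposition~\ref{par_rel}(2)(iii) with $\lambda=1$ gives $n-1=k^2(m-1)$, so if $m=2$ we get $n=k^2+1$ and are done. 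I would then assume $m\ge 3$ and aim to prove $k=1$, after which $n=m$ follows from the parameter equation.

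The tool is character theory in $\mathbb{Z}[\G]$. Identify each set with the sum of its elements, and put $N=\sum_{i=1}^m A_i$, $B_i=N-A_i$ and $T=\sum_{g\in\G}g$. Since the $A_i$ are disjoint, the SEDF property with $\lambda=1$ says exactly $A_iB_i^{(-1)}=T-1$ for every $i$ (here $1$ is the identity of $\G$). Applying an arbitrary non-principal character $\chi$ (so $\chi(T)=0$) yields $\chi(A_i)\,\overline{\chi(B_i)}=-1$; in particular $\chi(A_i)\neq 0$. Writing $t_i=|\chi(A_i)|^2$ and $s=\chi(N)$, and using $\chi(B_i)=s-\chi(A_i)$, one gets $t_i-1=s\,\overline{\chi(A_i)}$, hence $|s|^2t_i=(t_i-1)^2$, i.e.\ $t_i^2-(2+|s|^2)t_i+1=0$ whenever $s\neq 0$. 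The crux is the claim that $s=\chi(N)=0$ for \emph{every} non-principal $\chi$.

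To prove this, suppose $s\neq 0$ for some $\chi$. Then each $t_i$ is one of the two roots $\rho,\rho^{-1}$ of $X^2-(2+|s|^2)X+1$, with $\rho+\rho^{-1}=2+|s|^2>2$ and so $\rho\neq 1$. Summing $\overline{\chi(A_i)}=(t_i-1)/s$ over $i$ and using $\sum_i\chi(A_i)=s$ gives $\sum_i t_i=m+|s|^2$; if exactly $p$ of the $t_i$ equal $\rho$, this rearranges to $(\rho-1)\bigl((p-1)\rho-(m-p-1)\bigr)=0$, so $(p-1)\rho=m-p-1$. The cases $p\in\{0,1,m-1,m\}$ are each ruled out using $\rho>0$ and $m\ge 3$, leaving $2\le p\le m-2$ and $\rho=\frac{m-p-1}{p-1}$ a \emph{positive rational}. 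But $\rho=|\chi(A_i)|^2=\chi(A_i)\overline{\chi(A_i)}$ for some $i$ (one with $t_i=\rho$ exists since $p\ge 1$), and this is an algebraic integer, being a product of sums of roots of unity; a rational algebraic integer lies in $\mathbb{Z}$, so $\rho\in\mathbb{Z}_{\ge 1}$, and likewise $\rho^{-1}\in\mathbb{Z}_{\ge 1}$ (there are $m-p\ge 2$ indices with $t_i=\rho^{-1}$). Hence $\rho=1$, a contradiction, proving the claim.

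It then follows that $\chi(N)=0$ for all non-principal $\chi$, so $N$ — which, by disjointness, is the indicator function of $\bigcup_i A_i$, a set of size $mk$ — is constant, hence identically $1$; thus $mk=n$ and $\bigcup_i A_i=\G$. Substituting $n=mk$ into $n-1=k^2(m-1)$ gives $mk(k-1)=(k-1)(k+1)$, so $k\ge 2$ would force $mk=k+1$, impossible for $m\ge 3$; therefore $k=1$ and $n=m$. I expect the main obstacle to be the middle step: extracting the rationality of $\rho$ from the two summation identities and then collapsing the case $s\neq 0$ via the algebraic-integrality of $|\chi(A_i)|^2$; the bookkeeping over the value of $p$ is routine. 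Note that commutativity is used essentially (through characters), which is why the non-abelian case is not settled by this argument.
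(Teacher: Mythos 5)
Your argument is correct: the group-ring identity $A_iB_i^{(-1)}=T-1$, the quadratic satisfied by $t_i=|\chi(A_i)|^2$, the factorization $(\rho-1)\bigl((p-1)\rho-(m-p-1)\bigr)=0$, and the algebraic-integrality step all check out, and the endgame ($\chi(N)=0$ for all non-principal $\chi$ forces $mk=n$, which with $n-1=k^2(m-1)$ and $m\ge 3$ forces $k=1$) is sound. However, this is a genuinely different route from the one the paper relies on. The paper does not prove Proposition~\ref{PaStlambda=1} itself: it cites \cite{PaSt}, where the forward direction is a short, purely combinatorial argument using commutativity --- take $x\neq y\in A_1$, note that for $m>2$ the difference $x-y$ must also occur as $u-v$ with $u\in A_i$, $v\in A_j$ and $i,j\neq 1$, and then $x-u=y-v$ gives a repeated element of $M_1$, contradicting $\lambda=1$. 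Indeed the paper reproduces exactly this argument in generalized form in its later proposition on SEDF/coSEDF pairs, which it notes ``generalizes the forward direction of Proposition~\ref{PaStlambda=1}.'' The comparison: the elementary argument is much shorter and its core survives (in the SEDF-plus-coSEDF form) outside the abelian setting, which is what the paper exploits; your character-theoretic argument is confined to abelian groups but is self-contained, is in the spirit of the nonexistence results of Martin--Stinson and Bao et al.\ cited in Proposition~\ref{nonexistence}, and yields the extra structural fact that an abelian $(n,m,k,1)$-SEDF with $m>2$ would have to satisfy $mk=n$, i.e.\ be near-complete, before the parameter equation kills it. One small presentational point: the quadratic $t_i^2-(2+|s|^2)t_i+1=0$ holds for all $\chi$ (when $s=0$ it just gives $t_i=1$), so the qualifier ``whenever $s\neq0$'' is only needed when you divide by $s$; and for $m=3$ your case analysis leaves an empty range for $p$, which is the contradiction --- worth saying explicitly so the reader does not look for the rationality step there.
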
  
The proof of the the forward direction relies on the commutativity of the operation, while for the reverse direction, constructions are provided by the trivial SEDF and Proposition \ref{SEDFexistence} (1).

Finally, we mention the following characterization for the \emph{near-complete} abelian case (when $n=mk+1$), given in \cite{JeLi}.  An alternative construction for a $(243,11,22,20)$-SEDF, using cyclotomic methods, is given in \cite{WeYaFuFe}.
\begin{proposition}\label{nearcomplete}
Let $\A=\{ A_1, \ldots, A_m \}$ be a collection of $k$-sets ($k>1$) which partition the non-identity elements of an abelian group $\G$.  Then $\A$ is a non-trivial $(n,m,k, \lambda)$-SEDF if and only if
\begin{itemize}
\item[(i)] $(n,m,k,\lambda)=(n,2,\frac{n-1}{2}, \frac{n-1}{4})$, $n$ is congruent to $1$ mod $4$, and $A_1$ is a non-trivial regular $(n,\frac{n-1}{2}, \frac{n-5}{4}, \frac{n-1}{4})$ partial difference set in $\G$; or
\item[(ii)] $(n,m,k,\lambda)=(243,11,22,20)$ and each $A_j$ is a non-trivial regular $(243,22,1,2)$ partial difference set in $\G$ for $1 \leq j \leq 11$.
\end{itemize}
\end{proposition}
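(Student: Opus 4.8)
My plan is to route the argument through the theory of partial difference sets (PDSs) and strongly regular graphs, working in the integral group ring $\mathbb{Z}[\G]$; for $B\subseteq\G$ write $\widehat{B}=\sum_{b\in B}b$ and $\widehat{B}^{(-1)}=\sum_{b\in B}b^{-1}$. Since $\mathcal A$ is near-complete ($n=mk+1$) we have $\cup_{j\neq i}A_j=\G\setminus(\{e\}\cup A_i)$, so $\widehat{\cup_{j\neq i}A_j}=\widehat{\G}-e-\widehat{A_i}$. For the forward direction, rewrite the SEDF condition at index $i$ as $\widehat{A_i}\bigl(\widehat{\G}-e-\widehat{A_i}^{(-1)}\bigr)=\lambda(\widehat{\G}-e)$; using $\widehat{A_i}\widehat{\G}=k\widehat{\G}$, this rearranges to $\widehat{A_i}\,\widehat{A_i}^{(-1)}=(k-\lambda)\widehat{\G}-\widehat{A_i}+\lambda e$. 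Applying the antipode (inversion), which fixes $\widehat{\G}$, $e$ and the left-hand side, forces $\widehat{A_i}=\widehat{A_i}^{(-1)}$, so each $A_i$ is regular; substituting $\widehat{\G}=e+\widehat{A_i}+\widehat{\cup_{j\neq i}A_j}$ then yields $\widehat{A_i}^{\,2}=ke+(k-\lambda-1)\widehat{A_i}+(k-\lambda)\widehat{\cup_{j\neq i}A_j}$, which is exactly the defining equation of a non-trivial regular $(n,k,k-\lambda-1,k-\lambda)$-PDS. Combining $n-1=mk$ with $\lambda(n-1)=k^2(m-1)$ (Proposition~\ref{par_rel}(2)) gives $(k-\lambda)(n-1)=k^2$, hence $m\mid k$; writing $k=ms$ yields $\lambda=s(m-1)$, $n=m^2s+1$, and PDS parameters $(m^2s+1,\,ms,\,s-1,\,s)$. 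The reverse direction is this computation run backwards: substituting the PDS equation for each $A_i$ into $\widehat{A_i}(\widehat{\G}-e-\widehat{A_i})$ shows that a partition of $\G\setminus\{e\}$ into regular $(n,\tfrac{n-1}{2},\tfrac{n-5}{4},\tfrac{n-1}{4})$- or $(243,22,1,2)$-PDSs is an SEDF (recovering Proposition~\ref{SEDFexistence}(2) in case (i)).

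\textbf{Step 2: reduce the parameter possibilities.} The Cayley graph $\mathrm{Cay}(\G,A_i)$ is strongly regular with parameters $(m^2s+1,ms,s-1,s)$, so its restricted eigenvalues are the roots of $x^2+x-s(m-1)=0$. When $m=2$ this is the conference-graph case, and we are in case (i) with $n=4s+1\equiv1\pmod4$. When $m\geq3$, the two eigenvalue multiplicities cannot coincide (that would force $m=2$), so $1+4s(m-1)=t^2$ for an odd $t\geq3$ and the eigenvalues are the integers $r=(t-1)/2$ and $\ell=-(t+1)/2$. Every non-trivial character $\chi$ of $\G$ then satisfies $\chi(\widehat{A_i})\in\{r,\ell\}$, and applying $\chi$ to $\sum_i\widehat{A_i}=\widehat{\G}-e$ forces exactly $a=\tfrac{m(t+1)-2}{2t}$ of the values $\chi(\widehat{A_i})$ to equal $r$, independently of $\chi$. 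Integrality of $a$ forces $t\mid m-2$; writing $m-2=tc$ with $c\geq1$, integrality of $s=\tfrac{t^2-1}{4(tc+1)}$ forces $(tc+1)\mid(c^2-1)$ (using $t^2c^2\equiv1\pmod{tc+1}$), and a short divisibility estimate rules out $c\geq2$. Hence $c=1$, $m=t+2$, $s=(t-1)/4$, $t\equiv1\pmod4$; putting $t=4j+1$ leaves a single infinite family of candidates: $n=(4j+1)^2(j+1)$, $m=4j+3$, with $\G\setminus\{e\}$ partitioned into $4j+3$ disjoint regular $(n,4j^2+3j,j-1,j)$-PDSs.

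\textbf{Step 3: collapse the family to $j=2$.} It remains to show that within this family only $j=2$ can occur — giving $n=243$, $m=11$ and $(243,22,1,2)$-PDSs, i.e.\ case (ii), realised by the Berlekamp--van Lint--Seidel Cayley graph on $C_3^5$ (necessarily non-cyclic, by Proposition~\ref{nonexistence}(7)). For $j=1$ ($n=50$) this is immediate from Proposition~\ref{nonexistence}(8), or from the fact that the triangle-free $(50,7,0,1)$ strongly regular graph (the Hoffman--Singleton graph) is not a Cayley graph. The cases $j\geq3$ are the crux, and the step I expect to be the main obstacle: one must rule out any abelian group of order $(4j+1)^2(j+1)$ decomposing into $4j+3$ disjoint regular PDSs with the above common parameter set. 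Here the proof must deploy the heavier machinery — the standard feasibility conditions for strongly regular graphs (integrality of multiplicities, Krein conditions, absolute and Bruck--Ryser--Chowla-type bounds) applied to the resulting \emph{strongly regular decomposition} of the complete graph of this non-square order, multiplier and subgroup techniques for PDSs, and the classification results underlying Proposition~\ref{nonexistence} — exploiting the rigidity of a complete graph decomposing into three or more strongly regular graphs with identical parameters.
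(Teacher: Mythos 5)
First, a point of reference: the paper does not prove Proposition \ref{nearcomplete} at all --- it is quoted from \cite{JeLi} --- so there is no internal proof to compare your route against; I can only assess the proposal on its own terms. Your Steps 1 and 2 are correct and constitute a genuine reduction. The group-ring computation showing each block of a near-complete SEDF is inverse-closed and is a regular $(m^2s+1,\,ms,\,s-1,\,s)$-PDS is sound; $(k-\lambda)(n-1)=k^2$ together with $n-1=mk$ does give $k=ms$, $\lambda=s(m-1)$; for $m\geq 3$ integrality of $a$ forces $t\mid m-2$, and your ``short divisibility estimate'' does work ($4(tc+1)\leq t^2-1$ gives $4c<t$, while $(tc+1)\mid(c^2-1)$ with $c\geq 2$ would need $t<c$), so $c=1$ and only the one-parameter family $t=4j+1$, $m=4j+3$, $s=j$ survives. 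The converse direction is routine as you say, and $j=1$ is correctly dispatched (uniqueness of the Hoffman--Singleton graph, or Proposition \ref{nonexistence}(8), since $50=2\cdot 5\cdot 5$).

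The genuine gap is Step 3 for $j\geq 3$, which you explicitly leave to unspecified ``heavier machinery''; this is not a peripheral loose end but the mathematical core of the theorem. One must exclude, for \emph{every} $j\geq 3$, a partition of an abelian group of order $(4j+1)^2(j+1)$ into $4j+3$ disjoint regular $\bigl((4j+1)^2(j+1),\,(4j+3)j,\,j-1,\,j\bigr)$-PDSs --- for $j=3$, fifteen disjoint $(676,45,2,3)$-PDSs --- and none of the tools you list is shown to do this. Indeed the most obvious refinement of your character argument fails to help: since the blocks are pairwise disjoint, $\sum_{\chi\neq\chi_0}\chi(\widehat{A_i})\chi(\widehat{A_j})=-k^2$ for $i\neq j$, and solving for the number of non-trivial characters taking the value $r$ on both $A_i$ and $A_j$ gives $j(j+1)(4j+3)$, which is a non-negative integer consistent with the eigenvalue multiplicities for every $j$; so individual-SRG feasibility conditions (integrality, Krein, absolute bound) combined with pair-level character counting do not eliminate the family, and parameter sets such as $(676,45,2,3)$ are not known to be infeasible as strongly regular graphs. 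The decomposition of $K_{243}$ into eleven Berlekamp--van Lint--Seidel graphs also shows these decompositions need not be amorphic or of (negative) Latin square type, so generic ``strongly regular decomposition'' rigidity cannot be invoked wholesale. Without a concrete argument here --- either the specific machinery of \cite{JeLi} or a new argument exploiting the full partition structure --- the hard direction of the proposition remains unproven.
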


We note that $(243,11,22,20)$ are the only parameters with $m>2$ for which a SEDF is known to exist.



\section{Abelian SEDFs with $\lambda=1$: recursive constructions and equivalence}

In this section, we consider the situation when $\lambda=1$.  By Proposition \ref{PaStlambda=1}, all non-trivial abelian $(n,m,k,1)$-SEDFs are $(k^2+1,2,k,1)$-SEDFs.  We show that the known construction of  Proposition \ref{SEDFexistence}(1) is actually one amongst numerous different constructions for SEDFs with $\lambda=1$ in cyclic groups, and we provide recursive techniques to obtain these (which also provide constructions for GSEDFs).  Moreover, we demonstrate the existence of at least two non-equivalent $(k^2+1,2, k, 1)$-SEDFs for any composite $k$ (in the subsequent section we will extend this to any $k>2$).

Throughout, we will write  $\mathbb{Z}_n$ additively; its elements will be $\{0,1,\ldots,n-1\}$ and we fix the natural ordering $0< \cdots < n-1$.

We begin by presenting a new explicit construction for a $(k^2+1,2,k,1)$-SEDF in $(\mathbb{Z}_{k^2+1}, +)$ for even $k$, and show it is not equivalent to the known construction for $k>2$.  Part of its subtraction table is shown in Table \ref{tableNewZ}; the entry in the row labelled by $x$ and the column labelled by $y$ is $x-y$.

\begin{theorem}\label{newZconstruction}
 Let $\G=(\mathbb{Z}_{k^2+1},+)$, where $k=2a$ for some positive integer $a \geq 1$.  
 \begin{itemize}
 \item[(i)] Let $B_1=\{0,1, \ldots, a-1\} \cup \{2a, 2a+1, \ldots 3a-1\}$ and let $B_2= \bigcup_{i=1}^a \{ (4i-1)a, 4ia\}$; then $\{B_1, B_2\}$ is a $(k^2+1,2,k,1)$-SEDF in $\G$.
\item[(ii)] For $k>2$,  $\{ B_1, B_2\}$ is not equivalent to the SEDF in $\mathbb{Z}_{k^2+1}$ obtained from the construction of Proposition \ref{SEDFexistence} (1). 
 \end{itemize}
\end{theorem}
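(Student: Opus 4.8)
The plan is to prove (i) by an explicit bijective count of external differences, and (ii) by exhibiting a numerical quantity that is invariant under equivalence but takes different values on the two SEDFs. For (i), write $n=k^2+1=4a^2+1$. Since $m=2$, the only multisets to check are $M_1=\{b_1-b_2:b_1\in B_1,\,b_2\in B_2\}$ and $M_2=\{b_2-b_1:b_1\in B_1,\,b_2\in B_2\}=-M_1$; as negation permutes the nonzero elements of $\mathbb{Z}_n$, it suffices to show $M_1$ contains every nonzero element exactly once, and since $|M_1|=|B_1||B_2|=k^2=n-1$ this is equivalent to showing the $k^2$ differences $b_1-b_2$ are pairwise distinct (disjointness of $B_1,B_2$, hence their nonzero-ness, being clear from $B_1\subseteq\{0,\dots,3a-1\}$, $B_2\subseteq\{3a,\dots,4a^2\}$; likewise $|B_1|=|B_2|=k$). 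The key step is to introduce adapted coordinates: every $b_1\in B_1$ is uniquely $2a\varepsilon+j$ with $\varepsilon\in\{0,1\}$, $0\le j\le a-1$, and every $b_2\in B_2$ is uniquely $a(4i-1+\eta)$ with $1\le i\le a$, $\eta\in\{0,1\}$, so that
\[
b_1-b_2 = a\bigl((2\varepsilon+1-\eta)-4i\bigr)+j .
\]
Then $q:=2\varepsilon+1-\eta$ gives a bijection $\{0,1\}^2\to\{0,1,2,3\}$, and next $r:=4i-q$ gives a bijection $\{0,1,2,3\}\times\{1,\dots,a\}\to\{1,\dots,4a\}$ (the blocks $\{4i-3,\dots,4i\}$, $i=1,\dots,a$, tile $\{1,\dots,4a\}$), so the difference becomes $j-ar$, with $(b_1,b_2)\leftrightarrow(r,j)$ a bijection onto $\{1,\dots,4a\}\times\{0,\dots,a-1\}$. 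For fixed $r$ the values $j-ar$ fill the integer interval $[-ar,-ar+a-1]$, and these intervals for $r=1,\dots,4a$ tile $[-4a^2,-1]$; hence the $k^2$ differences are precisely the integers $-4a^2,\dots,-1$, i.e.\ modulo $n=4a^2+1$ exactly the nonzero elements of $\mathbb{Z}_n$, each once, which yields (i).

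For (ii), since $\G=\mathbb{Z}_n$ is abelian, an equivalence (allowing also an interchange of the two blocks) sends each block $S$ to $uS+t$ for a unit $u$ and some $t\in\mathbb{Z}_n$; I would use the invariant $\nu(S):=|S-S|$, the number of distinct internal differences of $S$, which is unchanged by translation and by dilation by a unit, hence is an equivalence invariant as an unordered pair $\{\nu(\cdot),\nu(\cdot)\}$ over the two blocks. For the SEDF of Proposition~\ref{SEDFexistence}(1), $A_1=\{0,\dots,k-1\}$ gives $A_1-A_1=\{-(k-1),\dots,k-1\}$, which is $2k-1$ distinct residues ($2(k-1)<n$ rules out wraparound), while $A_2=\{k,2k,\dots,k^2\}$ gives $A_2-A_2=\{mk:|m|\le k-1\}$, again $2k-1$ distinct residues since $\gcd(k,n)=1$ and $2(k-1)<n$; thus $\nu(A_1)=\nu(A_2)=2k-1$. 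For the new SEDF, writing $I=\{0,\dots,a-1\}$ we have $B_1=I\cup(2a+I)$, so $B_1-B_1=(I-I)\cup\bigl(2a+(I-I)\bigr)\cup\bigl(-2a+(I-I)\bigr)$, a union of three blocks of $2a-1$ consecutive integers inside $[-(3a-1),3a-1]$, pairwise disjoint and with no wraparound ($2(3a-1)<n$), so $\nu(B_1)=3(2a-1)=3k-3$. For $k>2$ one has $3k-3\ne 2k-1$, so $\{\nu(B_1),\nu(B_2)\}$ cannot equal $\{2k-1,2k-1\}$, and hence $\{B_1,B_2\}$ is not equivalent to the SEDF of Proposition~\ref{SEDFexistence}(1). (When $k=2$ one gets $\nu(B_1)=3=2k-1$, and in fact the two SEDFs are then equivalent, cf.\ Example~\ref{equiv:ex}(i); this is exactly why the hypothesis $k>2$ is needed.)

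There is no deep obstacle here; both parts are direct verifications, and the care is localised. In (i) the crux is to choose the two coordinate systems so that $b_1-b_2$ becomes affine in them, and then to confirm that $(\varepsilon,\eta)\mapsto q$ and $(q,i)\mapsto r$ are genuine bijections onto $\{0,1,2,3\}$ and $\{1,\dots,4a\}$ — this is precisely where the coefficients $2a$, $4a$ in the definitions of $B_1$, $B_2$ and the identity $n=4a^2+1$ are used (the latter is what makes $\{-4a^2,\dots,-1\}$ collapse onto all of $\mathbb{Z}_n\setminus\{0\}$). In (ii) the only technical point is to exclude accidental coincidences of differences modulo $n$ when evaluating $\nu$; this is routine because every difference that arises is small compared with $n$ (for $A_2$ one additionally invokes $\gcd(k,k^2+1)=1$), but it is the one place where the value of the modulus enters.
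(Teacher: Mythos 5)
Your proof is correct. Part (i) is essentially the paper's verification in different clothing: the paper lists $B_2-B_1$ directly as a union of intervals $D_i\cup E_i$ tiling $\{1,\dots,4a^2\}$, which is the mirror image of your tiling of $\{-4a^2,\dots,-1\}$ by the intervals $[-ar,\,-ar+a-1]$ obtained from the coordinates $(\varepsilon,j)$, $(i,\eta)$ and the bijections $q=2\varepsilon+1-\eta$, $r=4i-q$. Part (ii), however, takes a genuinely different route. The paper argues by contradiction on a putative affine map $f(x)=\alpha x+\beta$: using $2f(x+1)=f(x)+f(x+2)$ and the absence of wraparound, it shows the images of any three consecutive elements of $A_1=\{0,\dots,k-1\}$ must lie in the same half of $B_1$, forcing all $2a$ elements of $A_1$ into an $a$-element set; the remaining case $f(A_2)=B_1$ is reduced to this one via $A_2=2a(A_1+1)$. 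You instead extract a numerical equivalence invariant, $\nu(S)=|S-S|$, which is preserved by $S\mapsto uS+t$ ($u$ a unit) and hence by equivalence up to permuting the two blocks: $\nu(A_1)=\nu(A_2)=2k-1$ while $\nu(B_1)=3k-3$, and these differ precisely when $k>2$, consistently with the $k=2$ equivalence in Example \ref{equiv:ex}(i). Your wraparound checks ($2(3a-1)<4a^2+1$, and $\gcd(k,k^2+1)=1$ for $A_2$) are the right ones, and the invariant disposes of both possible matchings of blocks at once, so you avoid the paper's separate reduction step; it is shorter and arguably more transparent. What the paper's hands-on analysis of affine images buys is a template that it reuses and extends in Theorem \ref{kcomposite} for blocks that are unions of $r$ intervals — though your invariant appears to adapt there too, since $|U_1-U_1|=(2r-1)(2a-1)\neq 2ar-1$ for $a,r\geq 2$.
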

\begin{proof}
(i) The multiset $B_2-B_1$ is the union $\cup_{i=1}^a (D_i \cup E_i)$ where
\begin{itemize}
\item $D_i=\{ (4i-1)a, 4ia\}-\{2a,2a+1,\ldots,3a-1\}=\{(4i-4)a+1, (4i-4)a+2, \ldots, (4i-2)a\}$
\item $E_i=\{ (4i-1)a, 4ia \} - \{ 0,1, \ldots, a-1 \}=\{ (4i-2)a+1, (4i-2)a+2, \ldots, 4ia \}$.
\end{itemize}
Clearly this comprises each non-zero element of $\mathbb{Z}_{4a^2+1}$ precisely once.  The same is necessarily true for the multiset $B_1-B_2$, which consists of the negatives of these elements.\\

(ii)   Let $S_1$ be the $(k^2+1,2,k,1)$-SEDF in $\G$ with sets $A_1=\{0, 1,\ldots, 2a-2, 2a-1\}$ and $A_2=\{2a, 4a, \ldots, (4a-2)a, 4a^2 \}$ from Proposition \ref{SEDFexistence} (1).  Denote by $S_2$ the SEDF $\{B_1, B_2 \}$ from part (i).  We shall show that, for $k>2$, $S_1$ and $S_2$ are not equivalent.
 
 Recall that the automorphisms of $\mathbb{Z}_n$ are of the form $x \mapsto cx$ where $c$ is a unit of $\mathbb{Z}_n$.  By Definition \ref{equivalent}, $S_1$ is equivalent to $S_2$ if there exists a function $f(x)=\alpha x + \beta$, with $\alpha \in \mathbb{Z}_{k^2+1}^*$ (the group of units of $\mathbb{Z}_{k^2+1}$) and $\beta \in \mathbb{Z}_{k^2+1}$, such that $f(S_1)=S_2$. We prove that no such $f$ can exist for $a>1$.  Observe that for $a>1$, each block of each SEDF contains at least four elements.

Aiming for a contradiction, assume that there exists such an $f$.  First suppose that $f$ maps $A_1$ onto $B_1$.  Fix an ordering of the elements of $A_1$ as they are listed in the theorem statement.  We show that the images under $f$ of any sequence $\{x,x+1,x+2\}$ of three consecutive elements of $A_1$ must all lie in $C_1=\{0,1,\ldots, a-1\}$ or all lie in $C_2=\{2a, \ldots, 3a-1\}$.  Observe that $f(x)+f(x+2)=(\alpha x + \beta)+(\alpha(x+2)+\beta)=\alpha(2x+2)+ 2 \beta= 2 f(x+1)$.  Suppose that one of $f(x)$ and $f(x+2)$ lies in $C_1$ and the other in $C_2$.  Then $2a \leq f(x)+f(x+2) \leq 4a-2 (< 4a^2+1)$, ie $a \leq f(x+1) \leq 2a-1$; this is impossible since $B_1=\{0,1, \ldots, 3a-1\} \setminus \{a, \ldots, 2a-1\}$.  So both of $x$ and $x+2$ are mapped to the same $C_i$ ($i \in \{1,2\}$).  If this is $C_1$, then $1 \leq f(x)+f(x+2) \leq 2a+3 (<4a^2+1)$, i.e. $1 \leq f(x+1) < a-2$, so $f(x+1) \in C_1$.  If this is $C_2$, then $4a+1 \leq f(x)+f(x+2) \leq 6a-3 (<4a^2+1)$, i.e. $1 \leq f(x+1) < 3a-2$, so $f(x+1) \in C_2$. Thus all three elements are mapped to the same $C_i$.

However, repeated applications of this result to $\{0,1,2\}, \{1,2,3\}, \ldots, \{2a-3, 2a-2, 2a-1\}$ in $A_1$ shows that all $2a$ elements of $A_1$ must be mapped to the $a$ elements of one of the $C_i$, which is impossible.  Hence $f$ cannot map $A_1$ onto $B_1$.

Now, suppose $f$ maps $A_2$ onto $B_1$.  Observe that $A_2=2a(A_1+1)$.  Since $f(A_2)=B_1$, we have $B_1=f(A_2)=\alpha A_2 + \beta = 2a \alpha A_1 + (2 a \alpha + \beta)$.  Now, $2a$ is a unit of $\G$ (with inverse $-2a)$ and $\alpha$ is a unit by definition, so $2a \alpha$ is a unit.  Thus there is a function $g(x)= 2a\alpha x+ (2 a \alpha + \beta)$ such that $2a\alpha$ is a unit, $2a \alpha+\beta$ is a group element and $g(A_1)=B_1$.  But we have proved above that this cannot occur; again a contradiction.  So no such $f$ exists, and $S_1$ and $S_2$ are non-equivalent.
\end{proof}

\begin{table}[h]
    \renewcommand{\arraystretch}{1.2}
		\centering
		\setlength\tabcolsep{3pt}
		\begin{tabular}{|c|cccc|cccc|} 
			\hline
			& $0$ & $1$ & $\ldots$ & $a-1$ & $2a$ & $2a+1$ & $\ldots$ & $3a-1$	\\ \hline
			$3a$ & $3a$ & $3a-1$ & $\ldots$ & $2a+1$ & $a$ & $a-1$ & $\ldots$ & $1$	\\
			$4a$ & $4a$ & $4a-1$ & $\ldots$ & $3a+1$ & $2a$ & $2a-1$ & $\ldots$ & $a+1$	\\ \hline
			$7a$ & $7a$ & $7a-1$ & $\ldots$ & $6a+1$ & $5a$ & $5a-1$ & $\ldots$ & $4a+1$	\\
			$8a$ & $8a$ & $8a-1$ & $\ldots$ & $7a+1$ & $6a$ & $6a-1$ & $\ldots$ & $5a+1$\\ \hline
			$\vdots$ & $\vdots$ & $\ldots$ & $\ldots$ & $\vdots$ & $\vdots$ & $\ldots$ & $\ldots$ & $\vdots$\\ \hline
			$(4i-1)a$ & $(4i-1)a$ & $(4i-1)a-1$ & $\ldots$ & $(4i-2)a+1$ & $(4i-3)a$ & $(4i-3)a-1$ & $\ldots$ & $(4i-4)a+1$	\\
			$4ia$ & $4ia$ & $4ia-1$ & $\ldots$ & $(4i-1)a+1$ & $(4i-2)a$ & $(4i-2)a-1$ & $\ldots$ & $(4i-3)a+1$\\ \hline
			$\vdots$ & $\vdots$ & $\ldots$ & $\ldots$ & $\vdots$ & $\vdots$ & $\ldots$ & $\ldots$ & $\vdots$\\ \hline
			$(4a-1)a$ & $(4a-1)a$ & $(4a-1)a-1$ & $\ldots$ & $(4a-2)a+1$ & $(4a-3)a$ & $(4a-3)a-1$ & $\ldots$ & $(4a-4)a+1$	\\
			$4a^2$ & $4a^2$ & $4a^2-1$ & $\ldots$ & $(4a-1)a+1$ & $(4a-2)a$ & $(4a-2)a-1$ & $\ldots$ & $(4a-3)a+1$\\ \hline
		\end{tabular}
	
		\caption{Table showing $B_2-B_1$ for the construction in Theorem \ref{newZconstruction}}
		\label{tableNewZ}
	
	\end{table}

\begin{example}\label{newZex}
Using the construction of Theorem \ref{newZconstruction}, 
\begin{itemize}
\item a $(17,2,4,1)$-SEDF in $\mathbb{Z}_{17}$ is given by $B_1=\{0,1,4,5\}$, $B_2=\{6,8,14,16\}$; 
\item a $(37,2,6,1)$-SEDF in $\mathbb{Z}_{37}$ is given by $B_1=\{0,1,2,6,7,8\}$, $B_2=\{9,12,21,24,33,36\}$.  
\end{itemize}
\end{example}
Observe that $k=2$ is excluded from the above theorem, since in this case we obtain a $(5,2,2,1)$-SEDF in $\mathbb{Z}_5$ with $B_1=\{0,2\}$, $B_2=\{3,4\}$.  By Example \ref{equiv:ex}, this is equivalent to the $(5,2,2,1)$-SEDFs constructed using Proposition \ref{SEDFexistence} (1) and (2).

We have the following recursive result for SEDFs with $\lambda=1$ in cyclic groups, which encompasses both previous constructions and provides a means of generating new examples.

\begin{theorem}\label{recursive}
Let $S=\{A_1, A_2\}$ be a $(k^2+1,2,k,1)$-SEDF in $\mathbb{Z}_{k^2+1}$ with $A_1=\{x_1, \ldots, x_k\}$ and $A_2=\{y_1, \ldots, y_k\}$, such that $x_i<y_j$ for all $1 \leq i,j \leq k$.  Let $a \in \mathbb{N}$.  Then we can obtain from $S$ an $((ak)^2+1, 2, ak,1)$-SEDF $S^{\prime}=\{B_1,B_2\}$ in $\mathbb{Z}_{(ak)^2+1}$.  The blocks are given by
\begin{itemize}
\item $B_1= \bigcup_{i=1}^k \{ a x_i  + \alpha: 0 \leq \alpha \leq a-1 \}$
\item $B_2=\bigcup_{i=1}^k \{ a(y_i+ k^2 \beta) : 0 \leq \beta \leq a-1 \}$.
\end{itemize}
(Here $x_i,y_i$ denote the elements of $\mathbb{Z}_{(ak)^2+1}$ with these labels.)
\end{theorem}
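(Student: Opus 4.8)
The plan is to carry out all computations with integer representatives in $\{0,1,\ldots,N-1\}$, where $N:=(ak)^2+1$, and to reduce modulo $N$ only at the very end; note $N-1=a^2k^2=a^2(n-1)$ where $n=k^2+1$. First I would dispose of the structural requirements. Since $A_1$ and $A_2$ are disjoint $k$-subsets of $\{0,\ldots,k^2\}$ with every $x_i$ less than every $y_j$, we have $\max_i x_i\le k^2-1$ and $\max_j y_j\le k^2$; hence the sets $\{ax_i+\alpha:0\le\alpha\le a-1\}$ are pairwise-disjoint intervals of length $a$ lying in $[0,ak^2-1]$, giving $|B_1|=ak$, and the points $a(y_i+k^2\beta)$ are distinct integers lying in $[a\min_j y_j,\,a^2k^2]\subseteq[0,N-1]$ (so there is no wrap-around), giving $|B_2|=ak$. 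Moreover every element of $B_2$ exceeds $a\max_i x_i+(a-1)$, hence exceeds every element of $B_1$; this shows $B_1\cap B_2=\emptyset$ and, as a bonus, that $S'$ again satisfies the ordering hypothesis, so the construction can be iterated.

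The crucial input from $S$ is the following. Because $\{A_1,A_2\}$ is an $(n,2,k,1)$-SEDF, the multiset $\{y_i-x_j:1\le i,j\le k\}$ comprises each nonzero element of $\mathbb{Z}_n$ exactly once; and because $x_j<y_i$ for all $i,j$, each $y_i-x_j$ is an \emph{integer} in $\{1,\ldots,k^2\}$. Since there are exactly $k^2$ of these and they are pairwise distinct modulo $k^2+1$, the map $(i,j)\mapsto y_i-x_j$ is a bijection from $\{1,\ldots,k\}^2$ onto $\{1,2,\ldots,k^2\}$.

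With this in hand I would compute the external-difference multiset $M_2=\{b-b':b\in B_2,\ b'\in B_1\}$ of $S'$. A general element is $(ay_i+ak^2\beta)-(ax_j+\alpha)=a\bigl((y_i-x_j)+k^2\beta\bigr)-\alpha$. As $(i,j)$ ranges over $\{1,\ldots,k\}^2$ and $\beta$ over $\{0,\ldots,a-1\}$, the quantity $e:=(y_i-x_j)+k^2\beta$ ranges bijectively over $\{1,2,\ldots,ak^2\}$ (Euclidean division by $k^2$, combined with the bijection of the previous paragraph); and for each fixed $e$, letting $\alpha$ run over $\{0,\ldots,a-1\}$ produces exactly the block of consecutive integers $\{a(e-1)+1,\ldots,ae\}$. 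As $e$ runs over $\{1,\ldots,ak^2\}$ these blocks partition $\{1,2,\ldots,a^2k^2\}=\{1,\ldots,N-1\}$. Hence $M_2$, as a multiset of integers, is precisely $\{1,\ldots,N-1\}$ with each value occurring once; reducing modulo $N$, it comprises each nonzero element of $\mathbb{Z}_N$ exactly once. Finally $M_1=\{b'-b:b'\in B_1,\ b\in B_2\}=-M_2$ shares this property, so $S'=\{B_1,B_2\}$ is an $((ak)^2+1,2,ak,1)$-SEDF.

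I do not expect a serious obstacle; the only delicate point is the double reparametrisation $(i,j,\beta)\mapsto e$ and $e\mapsto(e,\alpha)$, where one must verify that these are genuine bijections onto the stated ranges (not merely equalities of multisets) and that the arithmetic never forces a reduction modulo $N$ — both of which are guaranteed by the ordering hypothesis $x_i<y_j$ and the crude size estimates established at the start.
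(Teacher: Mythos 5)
Your proof is correct and follows essentially the same route as the paper: the bijection $(i,j)\mapsto y_i-x_j$ onto $\{1,\dots,k^2\}$ coming from the $\lambda=1$ SEDF property together with the ordering hypothesis, the length-$a$ blocks of consecutive integers produced by varying $\alpha$, and the shifts by $ak^2\beta$ tiling $\{1,\dots,(ak)^2\}$, with $B_1-B_2$ handled as the set of negatives. Your explicit checks of disjointness, cardinalities and the absence of wrap-around modulo $(ak)^2+1$ make precise some points the paper leaves implicit, but do not change the argument.
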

\begin{proof}
For a fixed pair of indices $(i,j)$ ($1 \leq i,j \leq k$), consider the multiset of differences
$$ D_{ij}=\{ ay_i - (ax_j + \alpha): 0 \leq \alpha \leq a-1 \}$$
i.e. $\{a(y_i-x_j)+ \alpha: 0 \leq \alpha \leq a-1 \} = \{a(y_i-x_j), a(y_i-x_j)-1, \ldots, a(y_i-x_j-1)+1 \}$.  As $(i,j)$ runs through all possible pairs, $(y_i-x_j)$ takes each value $1, \ldots, k^2$ precisely once, since $S$ is a $(k^2+1,2,k,1)$-SEDF in $\mathbb{Z}_{k^2+1}$ and all differences $y_i-x_j$ lie in the set $\{1, \ldots, k^2\}$ since $x_i<y_j$ for all $1 \leq i,j \leq k$.  Thus $\cup_{i,j} D_{ij}$ contains each element from $1$ to $ak^2$ in $\mathbb{Z}_{(ak)^2+1}$.  

Now, for given $(i,j)$,  the multiset $\{ a(y_i+ k^2 \beta)-(a x_j+\alpha): 0 \leq \alpha \leq a-1\}= ak^2 \beta+D_{ij}$, and so for a fixed $\beta \in \{0, \ldots, a-1\}$, the union $\Delta_{\beta}$ of all these multisets as $(i,j)$ runs through all possible pairs yields all elements from $1+ak^2 \beta$ to $ak^2+ak^2 \beta=ak^2 (\beta+1)$ of $\mathbb{Z}_{(ak)^2+1}$, precisely once each.

Finally, observe that the union $\cup _{\beta=0}^{a-1} \Delta_{\beta}$ comprises all elements of $\mathbb{Z}_{(ak)^2+1}$ from $1$ to $(ak)^2$, once each.  Since this union equals $B_2-B_1$ (and $B_1-B_2$ consists of the negatives of these elements), we have established that $S^{\prime}$ has the required property.
\end{proof}

Table \ref{tableRecursive} illustrates part of the subtraction table for Theorem \ref{recursive}.  The elements of $B_2$ have been re-ordered to follow the proof approach.

\begin{table}[h]
		\centering
		\resizebox{\columnwidth}{!}{
		\begin{tabular}{|c|cccc|c|cccc|} 
			\hline
			& $a x_1$ & $a x_1+1$ & $\ldots$ & $ax_1+(a-1)$ & \ldots & $a x_k$ & $a x_k+1$ & $\ldots$ & $ax_k+(a-1)$ \\ \hline
			$ay_1$ & $a(y_1- x_1)$ & $a (y_1-x_1)-1$ & $\ldots$ & $a(y_1-x_1-1)+1$ & $\ldots$ & $a (y_1-x_k)$ & $a (y_1-x_k)-1$ & $\ldots$ & $a(y_1-x_k-1)+1$ \\
			$\vdots$ & $\vdots$ & $\vdots$ & $\vdots$ & $\vdots$ & $\ldots$ & $\vdots$ & $\vdots$ & $\vdots$ & $\vdots$ \\
			$ay_k$ & $a(y_k- x_1)$ & $a (y_k-x_1)-1$ & $\ldots$ & $a(y_k-x_1-1)+1$ & $\ldots$ & $a (y_k- x_k)$ & $a (y_k-x_k)-1$ & $\ldots$ & $a(y_k-x_k-1)+1$ \\ \hline

			$a(k^2+y_1)$ & $a(k^2+y_1- x_1)$ & $a (k^2+y_1-x_1)-1$ & $\ldots$ & $a(k^2+y_1-x_1-1)+1$ & $\ldots$ & $a (k^2+y_1-x_k)$ & $a (k^2+y_1-x_k)-1$ & $\ldots$ & $a(k^2+y_1-x_k-1)+1$ \\
			$\vdots$ & $\vdots$ & $\vdots$ & $\vdots$ & $\vdots$ & $\ldots$ & $\vdots$ & $\vdots$ & $\vdots$ & $\vdots$ \\
			$a(k^2+y_k)$ & $a(k^2+y_k- x_1)$ & $a (k^2+y_k-x_1)-1$ & $\ldots$ & $a(k^2+y_k-x_1-1)+1$ & $\ldots$ & $a (k^2+y_k- x_k)$ & $a (k^2+y_k-x_k)-1$ & $\ldots$ & $a(k^2+y_k-x_k-1)+1$ \\ \hline
			$\vdots$ & $\vdots$ & $\vdots$ & $\vdots$ & $\vdots$ & $\ldots$ & $\vdots$ & $\vdots$ & $\vdots$ & $\vdots$ \\ \hline
			
			$a((a-1)k^2+y_1)$ & $a((a-1)k^2+y_1- x_1)$ & $\ldots$ & $\ldots$ & $\ldots$ & $\ldots$ & $a ((a-1)k^2+y_1-x_k)$ & $\ldots$ & $\ldots$ & $\ldots$ \\
			$\vdots$ & $\vdots$ & $\vdots$ & $\vdots$ & $\vdots$ & $\ldots$ & $\vdots$ & $\vdots$ & $\vdots$ & $\vdots$ \\
			$a((a-1)k^2+y_k)$ & $a((a-1)k^2+y_k- x_1)$ & $\ldots$ & $\ldots$ & $\ldots$ & $\ldots$ & $a ((a-1)k^2+y_k- x_k)$ & $\ldots$ & $\ldots$ & $\ldots$ \\ \hline
		\end{tabular} }
	
		\caption{Table showing $B_2-B_1$ for the construction in Theorem \ref{recursive}}
		\label{tableRecursive}
	
	\end{table}

\begin{example}
Using the construction of Theorem \ref{recursive}:
\begin{itemize}
\item Let $S$ be the trivial $(2,2,1,1)$-SEDF $A_1=\{0\}$, $A_2=\{1\}$ in $\mathbb{Z}_2$.  Then $S^{\prime}$ is the $(a^2+1,2,a,1)$-SEDF $B_1=\{0,1,\ldots,a-1\}$, $B_2=\{a,2a,\ldots,a^2\}$ in $\mathbb{Z}_{a^2+1}$ of Proposition \ref{SEDFexistence} (1),  from the paper \cite{PaSt}.
\item If $S$ is the $(5,2,2,1)$-SEDF $\{0,1\}, \{2,4\}$ in $\mathbb{Z}_5$, then $S^{\prime}$ is the $(4a^2+1, 2, 2a, 1)$-SEDF in $\mathbb{Z}_{4a^2+1}$ from Proposition \ref{SEDFexistence} (1), whereas if $S$ is $\{0,2\}, \{3,4\}$ in $\mathbb{Z}_5$ then $S^{\prime}$ is the $(4a^2+1, 2, 2a, 1)$-SEDF in $\mathbb{Z}_{4a^2+1}$ from Theorem \ref{newZconstruction}.  This shows that, if the recursion is performed using two equivalent SEDFs as $S$, the resulting $S^{\prime}$'s need not be equivalent.
\end{itemize}
\end{example}

We may now prove the following result, which generalizes Theorem \ref{newZconstruction}.
\begin{theorem}\label{kcomposite}
For every $\mathbb{Z}_{k^2+1}$ with $k$ composite, there are at least two non-equivalent $(k^2+1, 2, k, 1)$-SEDFs. 
\end{theorem}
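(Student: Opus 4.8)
The plan is to exhibit, for each composite $k$, two explicit $(k^2+1,2,k,1)$-SEDFs in $\mathbb{Z}_{k^2+1}$ and to separate them by a cheap equivalence invariant, namely the size of the internal difference set $|C-C|$ of a block.

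First I would fix a nontrivial factorisation $k=ak_0$ with $a,k_0\ge 2$ (for instance $a$ the least prime divisor of $k$). The first SEDF $S_1$ is the one of Proposition~\ref{SEDFexistence}(1), with blocks $A_1=\{0,1,\dots,k-1\}$ and $A_2=\{k,2k,\dots,k^2\}$. For the second SEDF I would apply Theorem~\ref{recursive} not to the ``interval'' presentation of the Paley--Street SEDF in $\mathbb{Z}_{k_0^2+1}$ (which, as one checks, simply regenerates $S_1$), but to an automorphic re-presentation of it. Concretely, $k_0$ is a unit modulo $k_0^2+1$, so by Proposition~\ref{auto} the image of the $\mathbb{Z}_{k_0^2+1}$ Paley--Street SEDF under $x\mapsto k_0 x$ is again a $(k_0^2+1,2,k_0,1)$-SEDF, with blocks $A_1'=\{0,k_0,2k_0,\dots,(k_0-1)k_0\}$ and, using $k_0^2\equiv-1$, $A_2'=\{k_0^2-k_0+1,\dots,k_0^2\}$. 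Since $\max A_1'=k_0^2-k_0<k_0^2-k_0+1=\min A_2'$, the pair $\{A_1',A_2'\}$ meets the separation hypothesis of Theorem~\ref{recursive}; applying that theorem with multiplier $a$ yields the desired $S_2=\{B_1,B_2\}$ in $\mathbb{Z}_{(ak_0)^2+1}=\mathbb{Z}_{k^2+1}$, in which $B_1=\bigcup_{j=0}^{k_0-1}\{ak_0 j,\,ak_0 j+1,\dots,ak_0 j+a-1\}$ is a union of $k_0$ short blocks of length $a$ (rather than a single interval).

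For non-equivalence I would use that for a subset $C$ of an abelian group, $|C-C|$ is unchanged by translation and is sent by the automorphism $x\mapsto cx$ to $|c(C-C)|=|C-C|$; hence the unordered pair $\{|A_1-A_1|,|A_2-A_2|\}$ of internal-difference-set sizes is an invariant of the equivalence class of an SEDF in $\mathbb{Z}_n$. For $S_1$ we have $|A_1-A_1|=2k-1$, and since $k$ is a unit $|A_2-A_2|=|k\cdot\{-(k-1),\dots,k-1\}|=2k-1$, so the invariant is $\{2k-1,2k-1\}$. For $S_2$ the differences arising from $B_1$ have the shape $ak_0\delta+\varepsilon$ with $\delta\in\{-(k_0-1),\dots,k_0-1\}$ and $\varepsilon\in\{-(a-1),\dots,a-1\}$; a short magnitude count (using $a,k_0\ge 2$ to exclude wraparound modulo $a^2k_0^2+1$) shows these are pairwise distinct, so $|B_1-B_1|=(2k_0-1)(2a-1)$. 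Since $(2k_0-1)(2a-1)-(2k-1)=2(a-1)(k_0-1)>0$, the value $(2k_0-1)(2a-1)$ does not appear in $\{2k-1,2k-1\}$, so the two invariants differ and $S_1$ is not equivalent to $S_2$.

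The main obstacle is the first step: recognising that the ``second'' SEDF should be produced by recursing an automorphic image of Paley--Street rather than Paley--Street itself, so that the block shape genuinely changes, together with isolating a non-equivalence certificate light enough to check by hand — the internal-difference-set size serves this purpose, the only delicate point being the elementary inequality that rules out modular collisions among the $(2k_0-1)(2a-1)$ differences coming from $B_1$. (When $k$ is even one may instead cite Theorem~\ref{newZconstruction} directly; that result is essentially the special case $k_0=2$ of the construction above, and the present argument disposes of odd composite $k$ as well in the same uniform manner.)
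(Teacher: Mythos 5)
Your proposal is correct, and the pair of SEDFs you build is in fact the same pair the paper uses: the paper writes $k=ra$ ($r,a\ge 2$), applies Theorem \ref{recursive} both to $S_1=\{\{0,\ldots,r-1\},\{r,\ldots,r^2\}\}$ and to its automorphic image $S_2=rS_1=\{\{0,r,\ldots,r^2-r\},\{r^2-r+1,\ldots,r^2\}\}$, and notes that the first recursion just reproduces the Proposition \ref{SEDFexistence}(1) SEDF — exactly your $S_1$ and your ``recursed image of $x\mapsto k_0x$'' SEDF with $r=k_0$. Where you genuinely diverge is the non-equivalence argument. The paper argues by contradiction directly on a putative affine map $f(x)=\alpha x+\beta$: it first reduces the case $f(B_2)=U_1$ to $f(B_1)=U_1$ via $B_2=ar(B_1+1)$, then exploits $2f(x+1)=f(x)+f(x+2)$ on consecutive triples of $B_1$, a no-wraparound estimate, and an analysis of which short blocks $F_i$ the images can lie in (equal indices or a three-term arithmetic progression), each branch ending in a contradiction. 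You instead isolate an equivalence invariant — the unordered pair of internal-difference-set sizes $|A_i-A_i|$, preserved by translations and by $x\mapsto cx$ — and compute $\{2k-1,2k-1\}$ for the interval/arithmetic-progression SEDF versus $(2k_0-1)(2a-1)=2k-1+2(a-1)(k_0-1)>2k-1$ for the block $B_1$ of the second SEDF, with a routine magnitude bound ruling out modular collisions. Your computations check out (including the no-wraparound estimates and the identity $(2k_0-1)(2a-1)-(2k-1)=2(a-1)(k_0-1)$), and the unordered invariant automatically handles the possibility of the blocks being swapped, which the paper must treat as a separate case. Your route is shorter and arguably more robust — it would also simplify the paper's own proof of Theorem \ref{newZconstruction}(ii) — while the paper's hands-on argument buys nothing extra here beyond consistency with the method it already introduced for that earlier theorem. (Minor quibble: the Proposition \ref{SEDFexistence}(1) construction is due to Paterson and Stinson, not ``Paley--Street''; this is purely a naming slip.)
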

\begin{proof}
Since $k$ is composite, we can write $k=ra$ for some $a, r \geq 2$.  Consider the following two $(r^2+1, 2, r, 1)$-SEDFs in $\mathbb{Z}_{r^2+1}$:
\begin{itemize}
\item $S_1=\{ \{0,1,\ldots, r-1\}, \{r, 2r, \ldots, r^2\} \}$;
\item $S_2=rS_1=\{ \{0,r,\ldots, r^2-r\}, \{r^2-r+1, \ldots, r^2-1, r^2\} \}$.
\end{itemize}
Each of these satisfies the requirements of Theorem \ref{recursive}, so we may use the recursive construction of Theorem \ref{recursive} with $S_1$ and $S_2$, and $a(\geq 2)$, to obtain two $((ra)^2+1, 2, ra, 1)$-SEDFs (say $T_1$ and $T_2$ respectively).  Although $S_1$ and $S_2$ are themselves equivalent, we prove that $T_1$ and $T_2$ are non-equivalent SEDFs in $\mathbb{Z}_{k^2+1}$.  This generalizes the proof strategy of Theorem \ref{newZconstruction}.

Applying Theorem \ref{recursive} to $S_1$ yields $T_1=\{B_1, B_2\}$, where $B_1=\{0,1,\ldots,ar-1\}$ and $B_2=\{ar, 2ar, \ldots, (ar)^2\}$.  (This is the SEDF that would be obtained by setting $k=ar$ in the construction of Proposition \ref{SEDFexistence} (1).)  Applying Theorem \ref{recursive} to $S_2$ yields $T_2=\{U_1, U_2\}$, where $U_1=\cup_{i=0}^{r-1} F_i$ and $F_i=\{iar, iar+1, \ldots, iar+(a-1)=a(ir+1)-1 \}$.

Recall that the automorphisms of a cyclic group $\mathbb{Z}_n$ are of the form $x \mapsto cx$ where $c$ is a unit of $\mathbb{Z}_n$.  By Definition \ref{equivalent}, $T_1$ is equivalent to $T_2$ if there exists a function $f(x)=\alpha x + \beta$, with $\alpha \in \mathbb{Z}_{k^2+1}^*$ (the group of units of $\mathbb{Z}_{k^2+1}$) and $\beta \in \mathbb{Z}_{k^2+1}$, such that $f(T_1)=T_2$. We prove that no such $f$ can exist.  Observe that, since $a,r \geq 2$, each block of each SEDF contains at least four elements.

Any such $f$ would have to map one of $B_1$ or $B_2$ to $U_1$.  We claim that, in fact, it is sufficient to show that there is no $f$ which maps $B_1$ onto $U_1$.  For, suppose we have established this claim, and suppose there is an $f$ which maps $B_2$ onto $U_1$.  Observe that $B_2=ar(B_1+1)$.  Since $f(B_2)=U_1$, we have $U_1=f(B_2)=\alpha B_2 + \beta = (\alpha ar) B_1 + (\alpha ar + \beta)$.  Now, $ar$ is a unit of $\G$ (with inverse $-ar)$ and $\alpha$ is a unit by definition, so $\alpha ar$ is a unit.  Thus there is a function $g(x)= \alpha ar x+ (\alpha ar+\beta)$ such that $\alpha ar$ is a unit, $\alpha ar+\beta$ is a group element and $g(B_1)=U_1$.  But we have proved above that this cannot occur; a contradiction. 

So we now suppose, aiming for a contradiction, that there is an $f(x)= \alpha x+ \beta$ such that $f(B_1)=U_1=F_0 \cup F_1 \cup \cdots \cup F_{r-1}$.  For any set $\{x, x+1, x+2\}$ of elements of $B_1$, their images under $f$ must satisfy
$$ f(x+1)+f(x+1)=f(x)+f(x+2).$$
Note that all sums of pairs of elements of $U_1$ are strictly less than (under the specified ordering) $2ar^2-1$, i.e. less than $a^2r^2$ (since $a,r \geq 2$), and so do not reduce modulo $a^2 r^2+1$.

Now, let $i,j,k$ be the indices such that $f(x+1) \in F_i$, $f(x) \in F_j$ and $f(x+2) \in F_k$ (here $i,j,k$ are integers with $0 \leq i,j,k, \leq r-1$).  Then
$$ 2iar \leq f(x+1)+f(x+1) \leq 2iar+2a-2< (2i+1)ar $$
since $ar \geq 2a> 2a-2$.  Moreover,
$$ (j+k)ar \leq f(x)+f(x+2) \leq (j+k)ar+2a-2< (j+k+1)ar. $$
Now, if $j+k \geq 2i+1$, then $f(x)+f(x+2) \geq (2i+1)ar$ whereas $f(x+1)+f(x+1) < (2i+1)ar$, and similarly if $j+k \leq 2i-1$ then $f(x)+f(x+2) < 2iar$ whereas $f(x+1)+f(x+1) \geq 2iar$.  So we must have $j+k=2i$.  If any two of $\{i,j,k\}$ are equal, then the third is equal to both.  Hence there are two possibilities: $i=j=k$ or all of $\{i,j,k\}$ are distinct and form a three-term arithmetic progression.  This latter case is possible only when $r>2$.

We now consider the elements $\{0,1,2\} \subset B_1$.  Let $f(0) \in F_j$, $f(1) \in F_i$ and $f(2) \in F_k$.

If $i=j=k$, then all of $\{0,1,2\}$ are mapped to $F_i$ by $f$.  Then $\{1,2,3\} \subset B_1$ has two of its members mapped to $F_i$ and hence also $f(3) \in F_i$.  Repeating this process, we see that all $ar$ elements of $B_1$ are mapped to $F_i$, a contradiction since $|F_i|=a$ and $r \geq 2$.

If $r=2$, the proof is complete.  Otherwise, for $r \geq 3$, it must be the case that all of $\{i,j,k\}$ are distinct.  Since $2i=j+k$, one of $\{j,k\}$ must be less than $i$ and the other greater than $i$.  Suppose first that $k<i<j$.  Let $f(3) \in F_m$ and consider $\{1,2,3\}$; we have $i+m=2k$ and $\{i,k\}$ are distinct.  Thus all three of $\{i,k,m\}$ are distinct and $j-i=i-k=k-m$, ie $m<k<i<j$.  Repeating for all $0,1,\ldots, ar-1 \in B_1$, we deduce that all $ar$ elements must lie in distinct $F_i$'s. But there are only $r$ possible indices, and $a \geq 2$, so this is impossible.  The situation when $j<i<k$ is precisely analogous.
 
Thus all cases lead to a contradiction, and so there does not exist a function $f$ such that $f(B_1)=U_1$.  This completes the proof that $T_1$ and $T_2$ are non-equivalent.
\end{proof}

In fact, the recursive process of Theorem \ref{recursive} can be performed for GSEDFs with $m=2$ and $\lambda_1=\lambda_2=1$, and by appropriate choice of parameters we can build new SEDFs using GSEDFs which are not themselves SEDFs.   A recursive construction for GSEDFs was given in \cite{LuNiCa}; our result differs from this in that the value of $\lambda$ remains unchanged, and the new group is a larger cyclic group rather than a cross-product.

\begin{theorem}\label{GSEDFrecursive}
Let $S=\{A_1, A_2\}$ be a $(st+1,2; s,t; 1,1)$-GSEDF in $\mathbb{Z}_{st+1}$ with $A_1=\{x_1, \ldots, x_s\}$ and $A_2=\{y_1, \ldots, y_t\}$, such that $x_i<y_j$ for all $1 \leq i \leq s$ and $1 \leq j \leq t$.  Let $a,b \in \mathbb{N}$.  Then we can obtain from $S$ an $(abst+1, 2; as, bt; 1,1)$-GSEDF $S^{\prime}=\{B_1,B_2\}$ in $\mathbb{Z}_{abst+1}$.  The blocks are given by
\begin{itemize}
\item $B_1= \cup_{i=1}^s \{ a x_i  + \alpha: 0 \leq \alpha \leq a-1 \}$
\item $B_2=\cup_{i=1}^t \{ a(y_i+ \beta st) : 0 \leq \beta \leq b-1 \}$.
\end{itemize}
(here $x_i,y_i$ denote the elements of $\mathbb{Z}_{abst+1}$ with these labels).
\end{theorem}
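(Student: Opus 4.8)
The plan is to follow the proof of Theorem~\ref{recursive} essentially verbatim, with $st$ playing the role of $k^2$, and with the single multiplier $a$ of that result split into two: a common rescaling factor $a$ (which also dictates how many consecutive integers are inserted per point of $A_1$) and a factor $b$ (the number of shifted copies of the rescaled $A_2$). Since $m=2$, the multiset controlling $\lambda_2$ is $M_2=\{x-y : x\in B_2,\ y\in B_1\}=B_2-B_1$, while $M_1=B_1-B_2=-(B_2-B_1)$; so it suffices to show that $B_2-B_1$ comprises each non-identity element of $\mathbb{Z}_{abst+1}$ exactly once, and the claim for $M_1$ follows by negation (negation being a bijection of the non-identity elements). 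A count confirms the target: $|B_2-B_1|=|B_2|\,|B_1|=(bt)(as)=abst$ with multiplicity, which is exactly the number of non-identity elements of $\mathbb{Z}_{abst+1}$.

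For the core computation, fix indices $i$ (into $A_2$) and $j$ (into $A_1$) and consider the $a$ consecutive differences
\[
D_{ij}=\{\,a y_i-(a x_j+\alpha): 0\le\alpha\le a-1\,\}=\{\,a(y_i-x_j-1)+1,\ \ldots,\ a(y_i-x_j)\,\}.
\]
The hypothesis $x_i<y_j$ for all $i,j$ forces $y_i-x_j$, computed in $\mathbb{Z}$, to lie in $\{1,\ldots,st\}$; and since $S$ is a $(st+1,2;s,t;1,1)$-GSEDF, the multiset $M_2=A_2-A_1=\{y_i-x_j\}$ contains each non-identity element of $\mathbb{Z}_{st+1}$ exactly once, so the $st$ values $y_i-x_j$ are precisely $1,2,\ldots,st$, each once. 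Hence $\bigcup_{i,j}D_{ij}$ is the disjoint union of the blocks $[1,a],[a+1,2a],\ldots,[a(st-1)+1,ast]$, that is, it equals $\{1,\ldots,ast\}$, each element once. For fixed $\beta\in\{0,\ldots,b-1\}$, the identity $a(y_i+\beta st)-(a x_j+\alpha)=a\beta st+\big(a(y_i-x_j)-\alpha\big)$ shows that the corresponding union $\Delta_\beta$ equals $a\beta st+\{1,\ldots,ast\}=\{a\beta st+1,\ldots,a(\beta+1)st\}$, each element once; note all these integers lie between $1$ and $abst$, so no reduction occurs modulo $abst+1$. Finally $\bigcup_{\beta=0}^{b-1}\Delta_\beta=B_2-B_1=\{1,\ldots,abst\}$, hitting each non-identity element of $\mathbb{Z}_{abst+1}$ exactly once, as required.

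It remains to record the routine structural checks, all mirroring those implicit in Theorem~\ref{recursive}. Since $\gcd(a,abst+1)=1$, multiplication by $a$ is a bijection of $\mathbb{Z}_{abst+1}$; using this together with $x_i<y_j$ and $a,b\ge1$, one checks that the $as$ integers $a x_j+\alpha$ are pairwise distinct and lie in $\{0,\ldots,abst\}$ without wraparound (so $|B_1|=as$), that the $bt$ integers $a(y_i+\beta st)$ are pairwise distinct and unreduced (so $|B_2|=bt$), and that $B_1$ meets the multiples of $a$ only in the values $a x_j$, each of which is strictly below every element of $B_2$ (again by $x_i<y_j$ and the range bounds), so $B_1\cap B_2=\emptyset$. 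Thus $S'=\{B_1,B_2\}$ is a pair of disjoint subsets of sizes $as$ and $bt$ with $M_1,M_2$ each covering every non-identity element exactly once, i.e.\ an $(abst+1,2;as,bt;1,1)$-GSEDF.

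There is no real obstacle here: the argument is a direct transcription of the proof of Theorem~\ref{recursive}. The only points needing a little care are keeping the two multipliers distinct ($a$ is the common scaling factor and the horizontal block-length, whereas $b$ is merely the number of vertical copies) and tracking the $s$-versus-$t$ asymmetry between $A_1$ and $A_2$ that is absent in the SEDF special case, so as to confirm that it is precisely the $x_i<y_j$ hypothesis that rules out wraparound in the differences $y_i-x_j$.
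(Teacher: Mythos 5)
Your proof is correct and is exactly what the paper intends: the paper's entire proof of this theorem is the remark that it is ``precisely analogous to that of Theorem~\ref{recursive}'', and you have simply carried out that analogy, with the blocks $D_{ij}$ tiling $\{1,\ldots,ast\}$ and the $b$ shifts $\Delta_\beta$ tiling $\{1,\ldots,abst\}$. The extra structural checks you record (sizes, no wraparound, disjointness of $B_1$ and $B_2$) are fine and slightly more explicit than the paper's treatment.
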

\begin{proof}
The proof is precisely analogous to that of Theorem \ref{recursive}.
\end{proof}

\begin{corollary}
Let $k=as=bt$ for some $a,b,s,t \in \mathbb{N}$, such that there exists an $(st+1,2; s,t; 1,1)$-GSEDF $\{A_1, A_2\}$ in $\mathbb{Z}_{st+1}$ with $x<y$ for all $x \in A_1$ and $y \in A_2$.  Then an $(k^2+1, 2; k,1)$-SEDF can be obtained from $S$.
\end{corollary}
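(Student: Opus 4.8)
The plan is to apply Theorem~\ref{GSEDFrecursive} verbatim to the GSEDF supplied in the hypothesis, and then to read off the parameters of the output using the defining remark that a GSEDF with all block sizes equal and all $\lambda_i$ equal is an SEDF. There is no genuine obstacle here: the corollary is a direct specialization of Theorem~\ref{GSEDFrecursive} obtained by choosing the multipliers $a,b$ so that the two output block sizes coincide.

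First I would observe that the hypothesis provides exactly the input Theorem~\ref{GSEDFrecursive} needs: an $(st+1,2;s,t;1,1)$-GSEDF $S=\{A_1,A_2\}$ in $\mathbb{Z}_{st+1}$ satisfying $x<y$ for all $x\in A_1$, $y\in A_2$. Applying that theorem with the given $a,b\in\mathbb{N}$ yields an $(abst+1,\,2;\,as,\,bt;\,1,1)$-GSEDF $S'=\{B_1,B_2\}$ in $\mathbb{Z}_{abst+1}$, with the blocks given by the formulas in the theorem statement. No disjointness or difference counting needs to be redone, since Theorem~\ref{GSEDFrecursive} already certifies that $S'$ is a valid GSEDF.

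Next I would substitute the numerical relations $as=k$ and $bt=k$ from the hypothesis $k=as=bt$. Both block sizes of $S'$ then equal $k$, and the modulus becomes $abst+1=(as)(bt)+1=k\cdot k+1=k^2+1$. Hence $S'$ is an $(k^2+1,\,2;\,k,k;\,1,1)$-GSEDF in $\mathbb{Z}_{k^2+1}$. Finally, by the remark following the definition of GSEDF (an $(n,m;k,\dots,k;\lambda,\dots,\lambda)$-GSEDF is, by definition, an $(n,m,k,\lambda)$-SEDF), this is precisely an $(k^2+1,2,k,1)$-SEDF, as required. The only point meriting any care is checking that the two output block sizes genuinely coincide, which is exactly the content of the equality $as=bt$ that is built into the hypothesis.
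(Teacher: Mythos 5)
Your proposal is correct and is exactly the intended argument: the paper states this corollary as an immediate consequence of Theorem~\ref{GSEDFrecursive}, obtained by applying it with $as=bt=k$ so that $abst+1=k^2+1$ and both blocks have size $k$, and then invoking the remark that an equal-parameter GSEDF is an SEDF. Nothing further is needed.
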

An example of a GSEDF construction which can be used in the recursion is given by $S=(\{0,1\ldots,s-1\}, \{s, 2s, \ldots, ts \})$, presented in Theorem 4.4 of \cite{LuNiCa}.  The equivalent GSEDF $T=(\{0,t, \ldots (s-1)t\}, \{(s-1)t+1, (s-1)t+2, \ldots, st\}$ (obtained via the automorphism $x \mapsto tx$) may also be used. 

There is a very wide range of specific SEDF constructions obtainable from Theorem \ref{GSEDFrecursive}.  We illustrate with a specific example.

\begin{example}
\begin{itemize}
\item[(i)] Taking $s=2$ and $t=3$ with $T=(\{0,3\}, \{4,5,6\})$ in $\mathbb{Z}_7$, we obtain the following general construction in $\mathbb{Z}_{k^2+1}$ for any $k$ such that $k=2a=3b$ for some $a,b \in \mathbb{N}$, ie for any $k$ which is a multiple of $6$: 
\begin{itemize}
\item $B_1=\{0,1,\ldots, a-1\} \cup \{3a,3a+1, \ldots, 4a\}$
\item $B_2=\{4a,5a,6a \} \cup \{10a, 11a, 12a\} \cup \cdots \cup \{(6b-2)a, (6b-1)a , 6ab\}$
\end{itemize}
\item[(ii)] Taking $k=12$, i.e. $a=6$ and $b=4$, we obtain the following $(145,2,12,1)$-SEDF in $\mathbb{Z}_{145}$:
\begin{itemize}
\item $B_1=\{0,1,2,3,4,5,18,19,20,21,22,23\}$
\item $B_2=\{24,30,36,60,66,72,96,102,108,132,138,144\}$
\end{itemize}
\end{itemize}
\end{example}

\section{Non-abelian SEDFs with $\lambda=1$}

In the non-abelian setting, it is not  known whether every $(n,m,k,1)$-SEDF must be a $(k^2+1, 2, k, 1)$-SEDF.  The forward direction of Proposition \ref{PaStlambda=1} does not apply here, so it is possible that there may exist non-abelian $(k^2(m-1)+1, m, k, 1)$-SEDFs with $m>2$, although no examples are currently known. However, $(k^2+1, 2, k,1)$ is an admissible parameter set.  Unlike in the abelian case, there does not exist a non-abelian group of order $k^2+1$ for every value of $k$ - for example there exists no non-abelian group of order $k^2+1$ for $k \in \{2,4,6,8,10 \}$.

In this section, we exhibit an infinite family of non-abelian $(k^2+1, 2, k, 1)$-SEDFs for $k$ odd.  

We will consider the dihedral group $D_n$ of order $n$ as being generated by the elements $s$ (reflection) and $r$ (rotation by $\frac{4 \pi}{n}$), so that $D_n=\{ s^i r^j: 0 \leq i \leq 1, 0 \leq j < \frac{n}{2} \}$ and the generators satisfy $s^2=1$, $r^{\frac{n}{2}}=1$ and $sr=r^{-1}s$.

First, we present two examples, which illustrate the general construction to follow. 

\begin{example}\label{dihedralex}

\begin{itemize}
\item[(i)] Let $k=3$ and $\G=D_{10}$; take $A_1=\{e,s,r\}$ and $A_2=\{sr, r^3, sr^4\}$.
\item[(ii)] Let $k=5$ and $\G=D_{26}$; take $A_1=\{e,s,r,sr,r^2\}$ and $A_2=\{sr^2, r^5, sr^7,r^{10}, sr^{12}\}$.
\end{itemize}
\vspace{5mm}
Table \ref{table2} demonstrates the multiset of external differences for the $(26, 2, 5, 1)$-SEDF over $D_{26}$; the entry in the row labelled by $x$ and the column labelled by $y$ is $xy^{-1}$.  The sets have been ordered to emphasise the structure.

\begin{table}[h]
    \renewcommand{\arraystretch}{1.2}
		\centering
		\setlength\tabcolsep{3pt}
		\begin{tabular}{|c|ccc:cc|cc:ccc|} 
			\hline
			& $e$ & $r$ & $r^2$ & $s$ & $sr$ & $r^5$ & $r^{10}$ & $sr^2$ & $sr^7$ & $sr^{12}$	\\ \hline
			$e$ &  &  &  &  &  & $r^8$ & $r^3$ & $sr^2$ & $sr^7$ & $sr^{12}$ \\
			$r$ &  &  &  &  &  & $r^9$ & $r^4$ & $sr$ & $sr^6$ & $sr^{11}$ \\
			$r^2$ &  &  &  &  &  & $r^{10}$ & $r^5$ & $s$ & $sr^5$ & $sr^4$ \\ \hdashline
			$s$ &  &  &  &  &  & $sr^8$ & $sr^3$ & $r^2$ & $r^7$ & $r^{12}$\\
			$sr$ &  &  &  &  &  & $sr^9$ & $sr^4$ & $r$ & $r^6$ & $r^{11}$ \\ \hline
			
			$r^5$ & $r^5$ & $r^4$ & $r^3$ & $sr^8$ & $sr^9$ &  &  &  &  &  \\
			$r^{10}$ & $r^{10}$ & $r^9$ & $r^8$ & $sr^3$ & $sr^4$ &  &  &  &  &  \\ \hdashline
			$sr^2$& $sr^2$ & $sr$ & $s$ & $r^{11}$ & $r^{12}$ &  &  &  &  & \\
			$sr^7$ & $sr^7$ & $sr^6$ & $sr^5$ & $r^6$ & $r^7$ &  &  &  &  &  \\
			$sr^{12}$ & $sr^{12}$ & $sr^{11}$ & $sr^{10}$ & $r$ & $r^2$ &  &  &  &  &  \\
			\hline
		\end{tabular}
	
		\caption{Table of $(26, 2, 5, 1)$-SEDF over $D_{26}$}
		\label{table2}
	
	\end{table}

\end{example}

\begin{theorem}\label{thm:dihedral}
Let $k>1$ be odd.  Let $\G=D_{k^2+1}$, the dihedral group of order $n=k^2+1$.  There exists a $(k^2+1,2,k,1)$-SEDF in $\G$. \\

Specifically, $\A=\{ A_1, A_2 \}$ is a $(k^2+1,2,k,1)$-SEDF where
\begin{itemize}
\item $A_1=\{e,s, r, sr, r^2, sr^2,\ldots, r^{\frac{k-1}{2}} \}$, \mbox{i.e. } $\{r^i: 0 \leq i \leq \frac{k-1}{2}\} \cup \{sr^j: 0 \leq j \leq \frac{k-3}{2} \}$.
\item $A_2=\{ sr^{\frac{k-1}{2}}, r^k, sr^{\frac{k-1}{2}} r^k, r^{2k}, sr^{\frac{k-1}{2}} r^{2k}, \ldots, r^{\frac{k(k-1)}{2}}, sr^{\frac{k-1}{2}}r^{\frac{k(k-1)}{2}} \}$,\\ \mbox{ i.e. } $\{r^{ik}: 1 \leq i \leq \frac{k-1}{2} \} \cup \{ sr^{jk+\frac{k-1}{2}}: 0 \leq j \leq \frac{k-1}{2} \}$.
\end{itemize}
\end{theorem}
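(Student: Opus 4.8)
The plan is to verify the SEDF property directly by computing the multiset $M_1=\{xy^{-1}:x\in A_1,\ y\in A_2\}$ and checking that it contains every non-identity element of $\G$ exactly once. Because $m=2$, this single computation suffices: $M_2$ consists precisely of the inverses (with multiplicity) of the members of $M_1$, since $(xy^{-1})^{-1}=yx^{-1}$ and $yx^{-1}$ runs over $M_2$ as $x,y$ run over $A_1,A_2$, and inversion is a bijection of $\G$ fixing $e$, so $M_2$ then automatically covers every non-identity element once as well. I would first dispose of the routine fact that $A_1,A_2$ are disjoint $k$-subsets: writing $N=\frac{k^2+1}{2}$ for the order of $r$, the rotation exponents occurring in $A_1$ lie in $\{0,\dots,\frac{k-1}{2}\}$ and those in $A_2$ in $\{k,2k,\dots,\frac{(k-1)k}{2}\}$, while the reflection exponents of $A_1$ lie in $\{0,\dots,\frac{k-3}{2}\}$ and those of $A_2$ in $\{\frac{k-1}{2},k+\frac{k-1}{2},\dots,\frac{k^2-1}{2}\}$; all these exponents lie in $[0,N)$, the two rotation ranges are disjoint, the two reflection ranges are disjoint, and within each set the exponents are distinct, so $|A_1|=|A_2|=k$ and $A_1\cap A_2=\emptyset$.

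Next I would pass to the cyclic rotation subgroup $\langle r\rangle\cong\mathbb{Z}_N$. Write $A_1=\{r^a:a\in P_0\}\cup\{sr^a:a\in P_1\}$ and $A_2=\{r^a:a\in Q_0\}\cup\{sr^a:a\in Q_1\}$ with
\[
P_0=\{0,\dots,\tfrac{k-1}{2}\},\quad P_1=\{0,\dots,\tfrac{k-3}{2}\},\quad Q_0=\{ik:1\le i\le\tfrac{k-1}{2}\},\quad Q_1=\{jk+\tfrac{k-1}{2}:0\le j\le\tfrac{k-1}{2}\}
\]
regarded as subsets of $\mathbb{Z}_N$. From $s^2=1$ and $sr^as=r^{-a}$ one gets $(sr^a)^{-1}=sr^a$, and the product rules $r^ar^{-b}=r^{a-b}$, $r^a(sr^c)=sr^{c-a}$, $(sr^d)r^{-b}=sr^{d-b}$, $(sr^d)(sr^c)=r^{c-d}$ combine to give
\[
M_1=\bigl[(P_0-Q_0)\cup(Q_1-P_1)\bigr]\ \cup\ s\bigl[(Q_1-P_0)\cup(P_1-Q_0)\bigr],
\]
where $U-V=\{u-v:u\in U,\ v\in V\}$ denotes a multiset in $\mathbb{Z}_N$: the first bracket is the \emph{rotation part} and the second the \emph{reflection part} of $M_1$. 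It therefore remains to prove that $(P_0-Q_0)\cup(Q_1-P_1)$ equals $\{1,\dots,N-1\}$ and that $(Q_1-P_0)\cup(P_1-Q_0)$ equals $\mathbb{Z}_N$, each with every element of multiplicity one.

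The heart of the proof is a short tiling computation in $\mathbb{Z}_N$. Each of $P_0,P_1$ is an interval of integers (of lengths $\frac{k+1}{2}$ and $\frac{k-1}{2}$) and each of $Q_0,Q_1$ is an arithmetic progression of common difference $k$ lying in $[0,N)$, so each of the four difference sets above is a disjoint union of short integer intervals whose positions are governed by multiples of $k$. The one subtlety is that a summand $-ik$ must be replaced by its representative $N-ik\in[1,N)$, valid since $ik<N$ for $i\le\frac{k-1}{2}$, and which — after the reindexing $i\mapsto\ell=\frac{k+1}{2}-i$ — takes the clean form $\ell k-\frac{k-1}{2}$. One finds that $Q_1-P_1$ contributes the intervals $\{jk+1,\dots,jk+\frac{k-1}{2}\}$ for $0\le j\le\frac{k-1}{2}$ and $P_0-Q_0$ the intervals $\{\ell k-\frac{k-1}{2},\dots,\ell k\}$ for $1\le\ell\le\frac{k-1}{2}$; since $\frac{k-1}{2}$ and $\frac{k+1}{2}$ are consecutive integers, for each $t$ with $1\le t\le\frac{k-1}{2}$ the intervals indexed $j=t-1$ and $\ell=t$ abut to form $\{(t-1)k+1,\dots,tk\}$, while the single leftover interval $j=\frac{k-1}{2}$ fills $\{\frac{k^2-k}{2}+1,\dots,N-1\}$, so together these tile $\{1,\dots,N-1\}$ exactly. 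The reflection part works identically: $Q_1-P_0$ contributes $\{jk,\dots,jk+\frac{k-1}{2}\}$, $P_1-Q_0$ contributes $\{\ell k-\frac{k-1}{2},\dots,\ell k-1\}$, the pieces indexed $j=t-1$ and $\ell=t$ abut to $\{(t-1)k,\dots,tk-1\}$ for $1\le t\le\frac{k-1}{2}$, and the leftover interval $j=\frac{k-1}{2}$ fills $\{\frac{k^2-k}{2},\dots,N-1\}$, tiling $\mathbb{Z}_N$. In both cases every interval used lies inside $[0,N-1]$, so no reduction modulo $N$ interferes and multiplicity one is immediate.

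I expect the main obstacle to be organisational rather than conceptual: keeping the index bookkeeping exact (especially the reindexing that turns $-ik\bmod N$ into $\ell k-\frac{k-1}{2}$) and confirming that the two families of short intervals interlock with neither overlap nor gap — a verification that quietly relies on $k$ being odd, so that $\frac{k-1}{2}$ and $\frac{k+1}{2}$ are integers differing by $1$. As a final check I would match the computed multiset against Example~\ref{dihedralex} and Table~\ref{table2} in the cases $k=3$ and $k=5$.
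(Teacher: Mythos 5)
Your proposal is correct and takes essentially the same route as the paper's proof: both use the dihedral multiplication rules to split $\Delta(A_1,A_2)$ into a rotation part and a reflection part, reduce to two difference-multiset statements in $\mathbb{Z}_{(k^2+1)/2}$ (your $(P_0-Q_0)\cup(Q_1-P_1)$ and $(Q_1-P_0)\cup(P_1-Q_0)$ are exactly the pairs of sets the paper lists), and verify them by the same abutting-interval computations, your index $\ell$ matching the paper's $j$ via $\ell=\frac{k+1}{2}-i$. The only additions are cosmetic: you record the routine disjointness and cardinality check, which the paper leaves implicit.
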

\begin{proof}
 For disjoint subsets $X,Y$ of a group $\G$, let $\Delta(X,Y)=\{xy^{-1}:x \in X, y \in Y\}$.
It suffices to show that the multiset $\Delta(A_1,A_2)$ comprises each non-identity group element precisely once, since the multiset $\Delta(A_2,A_1)$ consists of the inverses of this multiset.

For $x,y \in \G$, $xy^{-1}$ equals:
\begin{itemize}
\item $r^{i-j}$ if $x=r^i$ and $y=r^j$;
\item $sr^{i-j}$ if $x=sr^i$ and $y=r^j$;
\item $r^{j-i}$ if $x=sr^i$ and $y=sr^j$;
\item $sr^{j-i}$ if $x=r^i$ and $y=sr^j$.
\end{itemize}
Note that $r^{(\frac{k^2+1}{2})}=e$.  

Obtaining the elements $r^i$ ($1 \leq i \leq \frac{k^2-1}{2}$) once each is equivalent to showing that in the additive group $(\mathbb{Z}_{\frac{k^2+1}{2}}, +)$, the multiset of differences between the pair of sets $\{0,1,\ldots,\frac{k-1}{2} \}$ and $\{k, 2k, \ldots, \frac{k(k-1)}{2} \}$ and the multiset of differences  between the pair of sets $\{\frac{k-1}{2}, \frac{3k-1}{2} ,\ldots,\frac{k^2-1}{2} \}$ and $\{0, 1, \ldots, \frac{k-3}{2} \}$ together comprise each non-zero element of $\mathbb{Z}_{\frac{k^2+1}{2}}$ once each.  It can be verified that this is the case: 
\begin{itemize}
\item $(jk+\frac{k-1}{2})-\{0,1,\ldots,\frac{k-3}{2} \}=\{jk+1, jk+2, \ldots, jk+\frac{k-1}{2}\}$ where  $0 \leq j \leq \frac{k-1}{2}$;
\item $ \{0, 1, \ldots, \frac{k-1}{2} \} - k(\frac{k-1}{2}-j) = \{jk+\frac{k+1}{2}, jk+\frac{k+3}{2}, \ldots, jk+k \}$ where $0 \leq j \leq \frac{k-3}{2}$.
\end{itemize}
For the second part, observe that $-k(\frac{k-1}{2}-j)=jk+(\frac{k+1}{2})$.

Obtaining the elements $sr^i$ ($0 \leq i \leq \frac{k^2-1}{2}$) once each corresponds to showing that the multiset of differences between the pair of sets $\{0, 1, \ldots, \frac{k-3}{2} \}$ and $\{k, 2k, \ldots, \frac{k(k-1)}{2} \}$ and the multiset of differences between the pair $\{\frac{k-1}{2}, \frac{3k-1}{2} ,\ldots,\frac{k^2-1}{2} \}$ and $\{0,1,\ldots,\frac{k-1}{2} \}$ together comprise all the elements of $\mathbb{Z}_{\frac{k^2+1}{2}}$ once each:
\begin{itemize}
\item $(jk+\frac{k-1}{2})-\{0,1,\ldots,\frac{k-1}{2} \}=\{jk, jk+1, \ldots, jk+\frac{k-1}{2}\}$ where  $0 \leq j \leq \frac{k-1}{2}$;
\item $ \{0, 1, \ldots, \frac{k-3}{2} \} - k(\frac{k-1}{2}-j)=\{jk+\frac{k+1}{2}, jk+\frac{k+3}{2}, \ldots, jk+(k-1) \}$ where $0 \leq j \leq \frac{k-3}{2}$.
\end{itemize}
The proof is demonstrated in Table \ref{table3}, which displays the multiset of external differences $\Delta(A_1,A_2)$.  The entry in the row labelled by $x$ and the column labelled by $y$ is $xy^{-1}$.  Each non-identity group element occurs precisely once in this table.

\begin{table}[h]
    \renewcommand{\arraystretch}{1.9}
		\centering
		\setlength\tabcolsep{3pt}
		\begin{tabular}{|c|ccccc:ccccc|} 
			\hline
			& $r^k$ & $r^{2k}$ & \ldots & $r^{k(\frac{k-3}{2})}$ & $r^{k(\frac{k-1}{2})}$ & $sr^{(\frac{k-1}{2})}$ & $sr^{(\frac{3k-1}{2})}$ & \ldots & $sr^{(\frac{(k-2)k-1}{2})}$ & $sr^{(\frac{k^2-1}{2})}$	\\ \hline
			$r^0=e$ & $r^{(\frac{k^2-2k+1}{2})}$ & $r^{(\frac{k^2-4k+1}{2})}$ & \ldots & $r^{(\frac{3k+1}{2})}$ & $r^{(\frac{k+1}{2})}$ & $sr^{(\frac{k-1}{2})}$ & $sr^{(\frac{3k-1}{2})}$ & \ldots & $sr^{(\frac{(k-2)k-1}{2})}$ & $sr^{(\frac{k^2-1}{2})}$ \\
			$r$ & $r^{(\frac{k^2-2k+3}{2})}$ & \ldots & \ldots & \ldots & $r^{(\frac{k+3}{2})}$ & $sr^{(\frac{k-3}{2})}$ & \ldots & \ldots & \ldots & $sr^{(\frac{k^2-3}{2})}$ \\
			$\vdots$ & \vdots & \ldots &  \ldots & \ldots & \vdots & \vdots & \ldots& \ldots & \ldots & \vdots \\
			$r^{(\frac{k-5}{2})}$ & $r^{k(\frac{k-1}{2})-2}$ & \ldots & \ldots  & $r^{2k-2}$ & $r^{k-2}$ & \vdots & \ldots & \ldots & \ldots & \vdots \\
			$r^{(\frac{k-3}{2})}$ & $r^{k(\frac{k-1}{2})-1}$ & $r^{k(\frac{k-3}{2})-1}$ &  \ldots & $r^{2k-1}$ & $r^{k-1}$ & $sr$ & $sr^{k+1}$ & \ldots & \ldots & $sr^{(\frac{k(k-1)}{2}+1)}$\\
			$r^{(\frac{k-1}{2})}$ & $r^{k(\frac{k-1}{2})}$ & $r^{k(\frac{k-3}{2})}$ & \ldots & $r^{2k}$& $r^k$ & $s$ & $sr^k$ & \ldots & \ldots & $sr^{(\frac{k(k-1)}{2})}$ \\ \hdashline
			
			$sr^0=s$ & $sr^{(\frac{k^2-2k+1}{2})}$ & $sr^{(\frac{k^2-4k+1}{2})}$ & \ldots & $sr^{(\frac{3k+1}{2})}$ & $sr^{(\frac{k+1}{2})}$  & $r^{(\frac{k-1}{2})}$ & $r^{(\frac{3k-1}{2})}$ & \ldots & $r^{(\frac{(k-2)k-1}{2})}$ & $r^{(\frac{k^2-1}{2})}$	\\
			$sr$ & $sr^{(\frac{k^2-2k+3}{2})}$ & \ldots & \ldots & \ldots & $sr^{(\frac{k+3}{2})}$ & $r^{(\frac{k-3}{2})}$ & $r^{(\frac{3k-3}{2})}$ & \ldots & \ldots & $ r^{(\frac{k^2-3}{2})}$ \\
			 $\vdots$ & \vdots & \ldots &  \ldots & \ldots & \vdots  & \vdots & \ldots &  \ldots & \ldots & \vdots\\
			$sr^{(\frac{k-5}{2})}$  & $sr^{k(\frac{k-1}{2})-2}$ & \ldots & \ldots  & $sr^{2k-2}$ & $sr^{k-2}$ &$ r^2 $ & $r^{k+2}$ & \ldots & \ldots & $r^{(\frac{k(k-1)}{2}+2)}$ \\
			$sr^{(\frac{k-3}{2})}$  & $sr^{k(\frac{k-1}{2})-1}$ & $sr^{k(\frac{k-3}{2})-1}$ &  \ldots & $sr^{2k-1}$ & $sr^{k-1}$ & $r$ & $r^{k+1}$ & \ldots & $r^{(\frac{k(k-3)}{2}+1)}$ & $r^{(\frac{k(k-1)}{2}+1)}$ \\
			\hline
		\end{tabular}
	
		\caption{Table showing $\Delta(A_1,A_2)$ for the $(k^2+1, 2, k, 1)$-SEDF over $D_{k^2+1}$ of Theorem \ref{thm:dihedral}}
		\label{table3}
	\end{table}

\end{proof}

We are now able to prove the following result.

\begin{corollary}
For every $k>2$, there exist at least two non-equivalent $(k^2+1, 2, k, 1)$-SEDFs. 
\end{corollary}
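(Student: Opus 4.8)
The plan is to prove the corollary by a short case split on the parity of $k$, assembling constructions that are already established. First I would treat the even case: if $k>2$ is even, write $k=2a$ with $a\geq 2$, so Theorem~\ref{newZconstruction}(i) produces a $(k^2+1,2,k,1)$-SEDF $\{B_1,B_2\}$ in $\mathbb{Z}_{k^2+1}$, while Proposition~\ref{SEDFexistence}(1) produces another, namely $\{\{0,\dots,k-1\},\{k,2k,\dots,k^2\}\}$, in the same group. Theorem~\ref{newZconstruction}(ii) asserts exactly that these two SEDFs are non-equivalent whenever $k>2$, so this case is immediate. (Alternatively, since every even $k>2$ is composite, one could simply invoke Theorem~\ref{kcomposite}.)

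For odd $k>2$, the idea is to exploit the fact that we now possess a genuinely non-abelian construction, so I would place one SEDF in an abelian group and one in a non-abelian group. Concretely, Proposition~\ref{SEDFexistence}(1) gives a $(k^2+1,2,k,1)$-SEDF in the cyclic (hence abelian) group $\mathbb{Z}_{k^2+1}$, while Theorem~\ref{thm:dihedral} — applicable precisely because $k$ is odd, so $k^2+1$ is even — gives a $(k^2+1,2,k,1)$-SEDF in the dihedral group $D_{k^2+1}$. For $k>2$ odd we have $k^2+1\geq 10$, so $D_{k^2+1}$ is non-abelian and therefore not isomorphic to $\mathbb{Z}_{k^2+1}$. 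By Definition~\ref{equivalent}(ii), an equivalence between SEDFs living in two groups presupposes an isomorphism of those groups; since none exists here, the cyclic and dihedral SEDFs cannot be equivalent.

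Since every integer $k>2$ is either even or odd, these two cases are exhaustive and the corollary follows. I do not anticipate any real obstacle: the substantive work is all carried out in Theorems~\ref{newZconstruction}, \ref{kcomposite} and \ref{thm:dihedral}, and what remains is only to check that the case split covers every $k>2$ and that $D_{k^2+1}$ is honestly non-abelian in the relevant range, which is what legitimises the ``different underlying groups, hence non-equivalent'' step. If one wished to avoid the dihedral groups for odd composite $k$, Theorem~\ref{kcomposite} could be used there instead, leaving the dihedral construction strictly needed only when $k$ is an odd prime.
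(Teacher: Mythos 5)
Your proof is correct and follows essentially the same route as the paper: split on the parity of $k$, using the dihedral construction of Theorem~\ref{thm:dihedral} against the cyclic one of Proposition~\ref{SEDFexistence}(1) for odd $k$ (non-equivalent since the groups are not isomorphic), and two non-equivalent cyclic constructions for even $k$. The only cosmetic difference is that for even $k$ the paper invokes Theorem~\ref{kcomposite} while you primarily cite Theorem~\ref{newZconstruction}(ii); both are valid and you note the alternative yourself.
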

\begin{proof}
If $k$ is odd, we may take the construction of Proposition \ref{SEDFexistence}(1)  in $\mathbb{Z}_{k^2+1}$ and the construction of Theorem \ref{thm:dihedral} in $D_{k^2+1}$.  These are non-equivalent as the groups are not isomorphic. Otherwise, $k$ is even and $k \geq 4$, i.e. $k=2a$ for some $a \geq 2$, and so two non-equivalent constructions in $\mathbb{Z}_{k^2+1}$ are guaranteed by Theorem \ref{kcomposite}.
\end{proof}

We end this section with a first step towards addressing the question: are the only non-trivial SEDFs with $\lambda=1$ those with $m=2$?  

\begin{definition}
Let $\G$ be a group of order $n$.
\begin{itemize}
\item Let $\A$ be a set of $m \geq 2$ disjoint $k$-subsets of $\G$, say $A_1, \dots , A_m$.  We say that $\A$ is an $(n,m,k,\lambda)$-coEDF if the multiset
\begin{equation*}
N=\{ y^{-1}x : x \in A_i, y \in A_j, i \neq j \}
\end{equation*}
comprises $\lambda$ occurrences of each nonzero element of $\G$.
\item
Let $\A$ be a set of $m \geq 2$ disjoint $k$-subsets of $\G$, say $A_1, \dots , A_m$.  We say that $\A$ is an $(n,m,k,\lambda)$-coSEDF if, for every $i$, $1 \leq i \leq m$, the multiset
\begin{equation*}
N_i=\{ y^{-1}x: x \in A_i, y \in \cup_{j \neq i} A_j\}
\end{equation*}
comprises $\lambda$ occurrences of each nonzero element of $\G$.
\end{itemize}
\end{definition}

Clearly if $\G$ is an abelian group, coEDFs and coSEDFs coincide precisely with EDFs and SEDFs.   An $(n,m,k,\lambda)$-coSEDF must satisfy precisely the same parameter conditions as an $(n,m,k,\lambda)$-SEDF.

For a set $X$ in a group $\G$, we denote $X^{-1}=\{x^{-1}: x \in X \}$.  For a collection $\A$ of sets $\{A_1, \ldots, A_m \}$ in a group $\G$, we denote $\A^{-1}=\{ A_1^{-1}, \ldots, A_m^{-1}\}$.  The following result is immediate.

\begin{lemma}
$\A=\{A_1, \ldots, A_m \}$ is an $(n,m,k,\lambda)$-EDF (respectively, SEDF) if and only if $\A^{-1}=\{A_1^{-1}, \ldots, A_m^{-1} \}$ is an $(n,m,k,\lambda)$-coEDF (respectively, coSEDF).
\end{lemma}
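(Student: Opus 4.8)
The statement to prove is: $\A=\{A_1,\ldots,A_m\}$ is an $(n,m,k,\lambda)$-EDF (respectively, SEDF) if and only if $\A^{-1}=\{A_1^{-1},\ldots,A_m^{-1}\}$ is an $(n,m,k,\lambda)$-coEDF (respectively, coSEDF). The plan is simply to track what the inversion map does to the defining multisets of external differences. First I would note that taking inverses of all elements is a bijection of $\G$ that fixes the identity, and that it sends a $k$-subset to a $k$-subset and preserves disjointness, so $\A^{-1}$ is a legitimate collection of $m$ disjoint $k$-subsets whenever $\A$ is. This handles all the structural (non-multiset) parts of both definitions at once and in both directions, since $(\A^{-1})^{-1}=\A$.

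The core computation is the identity $(xy^{-1})^{-1}=yx^{-1}$ valid in any group. Fix $i\neq j$ and look at the EDF/coEDF multisets. An element of the multiset $M=\{xy^{-1}:x\in A_i,\,y\in A_j,\,i\neq j\}$ for $\A$ corresponds, under the inversion bijection $g\mapsto g^{-1}$, to $yx^{-1}$ with $y\in A_j$, $x\in A_i$; writing $u=y\in A_j$ and $v=x\in A_i$, this is exactly $v^{-1}u$ reversed — more directly, $\{u^{-1}v : v\in A_i^{-1}? \}$ — so I would instead phrase it cleanly: for $a\in A_i$, $b\in A_j$, the pair $(a^{-1},b^{-1})\in A_i^{-1}\times A_j^{-1}$ contributes $(b^{-1})^{-1}a^{-1}=ba^{-1}=(ab^{-1})^{-1}$ to the multiset $N$ defining a coEDF on $\A^{-1}$. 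Hence $N$ (for $\A^{-1}$) is precisely the image of $M$ (for $\A$) under $g\mapsto g^{-1}$. Since $g\mapsto g^{-1}$ is a bijection on $\G\setminus\{e\}$, the multiset $M$ consists of $\lambda$ copies of each non-identity element if and only if the multiset $N$ does. That establishes the EDF $\Leftrightarrow$ coEDF equivalence; applying it with $\A$ replaced by $\A^{-1}$ gives the converse direction with no extra work.

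For the SEDF/coSEDF case the argument is identical but applied block-by-block: fix $i$, and observe that the pair $(a^{-1},b^{-1})$ with $a\in A_i$, $b\in\bigcup_{j\neq i}A_j$ ranges exactly over $A_i^{-1}\times\bigcup_{j\neq i}A_j^{-1}$, and contributes $(b^{-1})^{-1}a^{-1}=(ab^{-1})^{-1}$ to $N_i$ for $\A^{-1}$. So $N_i$ (for $\A^{-1}$) is the inverse-image of $M_i$ (for $\A$), and the ``$\lambda$ copies of each non-identity element'' condition transfers for each $i$ separately. I do not anticipate a genuine obstacle here — the only thing to be careful about is bookkeeping the index/set roles correctly so that the left-quotient in the coEDF definition matches up with the right-quotient in the EDF definition via a single application of $(xy^{-1})^{-1}=yx^{-1}$, and noting explicitly that $(\A^{-1})^{-1}=\A$ so that one implication suffices to yield both.
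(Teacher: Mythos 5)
Your argument is correct and is exactly the computation the paper has in mind — the paper states this lemma without proof (``The following result is immediate''), and your observation that inversion is a bijection of $\G$ fixing the identity together with $(ab^{-1})^{-1}=ba^{-1}$, applied either to each pair $i\neq j$ or blockwise to $M_i$, is the intended justification. The only blemish is the garbled intermediate phrasing (``$\{u^{-1}v : v\in A_i^{-1}?\}$''), which you immediately replace with the clean and correct bookkeeping, so nothing is missing.
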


\begin{example}\label{ex:dihedralex}
The $(10,2,3,1)$-SEDF $\A$ from Example \ref{dihedralex} in $D_{10}$, with sets $A_1=\{e,s,r\}$ and $A_2=\{sr,r^3,sr^4\}$ is also a $(10,2,3,1)$-coSEDF.  Unlike in the abelian case, the subtraction tables corresponding to the SEDF and coSEDF are different.  Here, $\A^{-1}$ comprises sets $A_1^{-1}=\{e,s,r^4\}$ and $A_2^{-1}=\{sr, r^2, sr^4\}$.  It can be checked directly that this is also a  $(10,2,3,1)$-SEDF in $D_{10}$.
\end{example}

\begin{proposition}
Let $\G$ be a group of order $n$.  Suppose that $\A=\{A_1, \ldots, A_m \}$ is an $(n,m,k,1)$-SEDF and an $(n,m,k,1)$-coSEDF (equivalently, that $\A$ and $\A^{-1}$ are both $(n,m,k,1)$-SEDFs).  Then either $m=2$ or $k=1$.
\end{proposition}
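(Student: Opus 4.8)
The plan is to prove the contrapositive: assuming $m \geq 3$ and $k \geq 2$, I will show that $\A=\{A_1,\ldots,A_m\}$ cannot be both an $(n,m,k,1)$-SEDF and an $(n,m,k,1)$-coSEDF. (If $m=2$ or $k=1$ there is nothing to prove, so those cases are set aside at the outset.)

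First I would recast the two $\lambda=1$ hypotheses in product-set form. Write $C_i=\bigcup_{j\neq i}A_j$, so that $|C_i|=(m-1)k$ and, by Proposition \ref{par_rel}(2)(iii) with $\lambda=1$, $|A_i|\,|C_i|=(m-1)k^2=n-1=|\G\setminus\{e\}|$. The SEDF condition for block $i$ says the multiset $\{xy^{-1}:x\in A_i,\ y\in C_i\}$ contains each non-identity element exactly once; since its index set $A_i\times C_i$ has exactly $n-1$ elements, this is equivalent to the map $(x,y)\mapsto xy^{-1}$ being a bijection onto $\G\setminus\{e\}$, hence injective. From $xy^{-1}=x'y'^{-1}$ with $x\neq x'$ one gets $x'^{-1}x=y'^{-1}y$, a common non-identity element of $A_i^{-1}A_i$ and $C_i^{-1}C_i$, and conversely any such common element produces a collision; so injectivity is equivalent to
\[
A_i^{-1}A_i\cap C_i^{-1}C_i=\{e\}.
\]
The coSEDF condition for block $i$ I would keep as is: the multiset $N_i:=\{y^{-1}x:x\in A_i,\ y\in C_i\}=C_i^{-1}A_i$ contains each non-identity element exactly once.

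The main step uses a single element. Since $k\geq 2$, pick distinct $a,b\in A_1$ and set $h:=a^{-1}b$, a non-identity element of $A_1^{-1}A_1$. Count the representations $h=y^{-1}x$ with $x\in A_i$, $y\in A_j$, $i\neq j$ (equivalently, pairs $(x,y)$ with $x,y$ in distinct blocks and $y^{-1}x=h$). Grouping by the block $A_i$ containing $x$: for each $i$ the number of such pairs is the multiplicity of $h\neq e$ in $N_i$, which equals $1$; hence $h$ has exactly $m$ representations in total. On the other hand, every such representation must have $1\in\{i,j\}$: otherwise $A_i,A_j\subseteq C_1$, so $h\in A_i^{-1}A_j\subseteq C_1^{-1}C_1$ while also $h\in A_1^{-1}A_1$ and $h\neq e$, contradicting $A_1^{-1}A_1\cap C_1^{-1}C_1=\{e\}$ (the SEDF condition for block $1$). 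The $m$ representations therefore partition into those with $x\in A_1$ — of which there is exactly $1$, the multiplicity of $h$ in $N_1=\G\setminus\{e\}$ — and those with $y\in A_1$, i.e.\ $x\in C_1$ — of which there is exactly $1$, the multiplicity of $h$ in $A_1^{-1}C_1=N_1^{-1}$, equivalently that of $h^{-1}\neq e$ in $N_1=\G\setminus\{e\}$. (The two families are disjoint since $i\neq j$, and jointly exhaust all representations by the previous claim.) Hence $m=1+1=2$, contradicting $m\geq 3$, and the proposition follows.

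The point I expect to require the most care is the reformulation in the second paragraph: checking that bijectivity of $(x,y)\mapsto xy^{-1}$ really is equivalent to the identity $A_i^{-1}A_i\cap C_i^{-1}C_i=\{e\}$ — this is exactly where $\lambda=1$ enters, through the equality $|A_i||C_i|=n-1$ — and tracking left versus right multiplication carefully in the non-abelian setting. Everything afterwards is the short counting argument above. It is worth remarking that in an abelian group SEDF and coSEDF coincide and $C_i^{-1}C_i=C_iC_i^{-1}$, so this argument specialises to the forward direction of Proposition \ref{PaStlambda=1}.
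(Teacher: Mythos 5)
Your proof is correct and is essentially the paper's argument: both take a non-identity element $h=a^{-1}b$ with $a\neq b\in A_1$, use the $\lambda=1$ coSEDF property to count its cross-block representations (exactly $m$ in total, of which exactly two involve $A_1$), and use the $\lambda=1$ SEDF property of block $1$ to rule out any representation avoiding $A_1$ --- your identity $A_1^{-1}A_1\cap C_1^{-1}C_1=\{e\}$ is just the paper's ``two equal distinct external differences arising out of $A_1$'' contradiction recast in product-set form. The only difference is presentational: you run the count forwards to conclude $m=1+1=2$, whereas the paper subtracts the two block-$1$ contributions and derives the contradiction from the remaining $m-2\geq 1$ representations among the blocks $A_2,\ldots,A_m$.
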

\begin{proof}
Suppose that $m>2$ and $k>1$.  Let $N^{\prime}_{ij}=\{ y^{-1}x: x \in A_i, y \in A_j \}$.  The multiset union $$\bigcup_{1 \leq i \leq m, \, 1 \leq j \leq n} N^{\prime}_{ij}$$
comprises every non-identity element of $\G$ precisely $m$ times.  The multiset union
$$\bigcup_{i \neq 1, \, j \neq 1} N^{\prime}_{ij}$$
comprises $m-2$ copies of each non-identity element of $\G$, since $\bigcup_{j \neq 1} N^{\prime}_{1j}(=N_1)$ comprises each non-identity element once, as does $\bigcup_{i \neq 1} N^{\prime}_{i1}$ (the set of inverses of $N_1$).

Let $x \neq y \in A_1$ (this is possible since $k>1$).  Set $\alpha=y^{-1}x$.  Since $\alpha$ is a non-identity element of $\G$ and $\A$ is an $(n,m,k,1)$-coSEDF in $\G$, there exist $u \in A_i$ and $v \in A_j$ for some $i, j \in \{2, \ldots, m\}$ with $i \neq j$ such that $v^{-1}u=\alpha$ (from above, this is possible since $m>2$).  Then $v^{-1}u=\alpha=y^{-1}x$, which rearranges to $v^{-1}=y^{-1}x u^{-1}$, i.e. $y v^{-1}=x u^{-1}$.  However, $\A$ is an $(n,m,k,1)$-SEDF, and this equality corresponds to two equal distinct external differences arising out of $A_1$ - a contradiction.
\end{proof}
This result generalizes the forward direction of Proposition \ref{PaStlambda=1}.  It indicates that, if a non-trivial non-abelian $(n,m,k,1)$-SEDF exists with $m>2$, it cannot be a $(n,m,k,1)$-coSEDF.

\section{Computational approach and results}

In this section, we discuss the computational approach which allowed us to find, and classify, all SEDFs in groups up to order 24.

\subsection{Algorithm description and analysis}

\begin{algorithm}

\caption{SEDF Finding}

\label{sedf:checkpart}\begin{algorithmic}[1]

\Procedure{CheckPartial}{$G,n,m,\lambda, L$}

\For{$i1 \in [1..m]$}

\State $Count \gets [0,0,\dots,0]$

\Comment{$Count$ is length $n$}

\For{$i2 \in [1..i1-1,i1+1..m]$}

\For{$j1 \in L[i1]$}

\For{$j2 \in L[i2]$}

\State$Count[j1*j2^{-1}]\ \mathrel{{+}{=}}\ 1$

\If{$Count[j1*j2^{-1}] > \lambda$}

\State\Return \textsc{False}

\EndIf





\EndFor

\EndFor

\EndFor

\EndFor

\State \Return \textsc{True}

\EndProcedure

\end{algorithmic}

\label{sedf:search}\begin{algorithmic}[1]

\Procedure{SEDFSearch}{$G,n,m,k,\lambda,L,p$}

\If{\textbf{not} \textsc{CheckPartial}$(G,n,m,\lambda,L)$}

\State\Return

\Comment{Too many occurrences of some difference}

\EndIf

\If{$m*k-(\sum s \in L. |s|) > n - p$}

\State\Return

\Comment{Not enough values remain to fill the SEDF}

\EndIf

\If{$\forall s \in L. |s| = k$}

\State\textbf{Output}$(L)$

\Comment{Found a valid, complete SEDF}

\State\Return

\EndIf

\State{\textsc{SEDFSearch}$(G,n,m,k,\lambda,L,p+1)$}

\Comment{Try SEDFs without G[p]}

\For{$i \in [1..m]$}

\If{$|L[i]| = k$}

\State\textbf{Continue}

\Comment{Continue on to next value for $i$}

\EndIf

\State{$L^{\prime} := L$}\Comment{Make copy of $L$}

\State{Add($L^{\prime}[i], G[p])$}

\State{\textsc{SEDFSearch}$(G,n,m,k,\lambda,L',p+1)$}

\If{$|L[i]| = 0$}

\State\Return \Comment{Only try adding a value to one empty set}

\EndIf

\EndFor

\EndProcedure

\end{algorithmic}

\label{sedf:search}\begin{algorithmic}[1]

\Procedure{SEDFSearchStart}{$G,n,m,k,\lambda$}

\Comment{$G$ is interpreted as a list, with identity first}

\State{$L \gets [\{G[1]\},\{\},\dots,\{\}]$}

\Comment{$L$ has length $m$}

\State{\textsc{SEDFSearch}{$(G,n,m,k,\lambda,L,2)$}}

\EndProcedure

\end{algorithmic}

\end{algorithm}

Computational search results were obtained via a constraint-style backtrack search.

Preprocessing of the group information was performed using GAP (\cite{GAP}), in particular its \textit{SmallGroups} library, while the search for SEDFs was implemented in Java, using a recursive depth-first search algorithm

Algorithm \ref{sedf:search} shows how we search for SEDFs. This algorithm will return all \((n,m,k,\lambda)\)-SEDFs in a group $G$. This algorithm assumes that $\lambda(n-1) = k^2(m-1)$ and that the elements of \(G\) are stored as a list, with the identity of \(G\) as the first element. As \(G\) is a list we will use the notation \(G[i]\) to access the \(i^{th}\) member of \(G\).  We order the elements of $G$ using their position in this list.

We build up the SEDF as a list of sets called \(L\). We build it one value at a time, by inserting elements of \(G\) into the members of \(L\). The algorithm begins with \textsc{SEDFSearchStart}. This function partially breaks the symmetry of left-shifting and right-shifting SEDFs, by requiring that the identity is always in the SEDF.

The function \textsc{SEDFSearch} builds the SEDF \(L\) incrementally. This function accepts a partially constructed SEDF \(L\), and a current position \(p\) in the group \(G\). This function will find all SEDFs which can be made by adding values from \(G[p]\) onwards to \(L\). \textsc{SEDFSearch} begins by trying to prove it is impossible to extend \(L\) to a valid SEDF. Firstly Line 2 calls \textsc{CheckPartial}, which calculates the differences between each pair of sets in \(L\) -- if any value occurs more than \(\lambda\) times we can reject \(L\). Line 4 then checks whether there are enough remaining values which could be inserted into \(L\) to complete the SEDF, and rejects if not.

In Line 6 we check if each element of \(L\) contains \(k\) values. In this case we have a complete candidate SEDF. As we know that $\lambda(n-1) = k^2(m-1)$ and in \textsc{CheckPartial} we calculated $k^2(m-1)$ differences and counted at most $ \lambda$ occurrences of each of the \(n-1\) non-identity elements of \(G\), then we must have counted exactly \(\lambda\) differences for each non-identity element, and therefore found a SEDF.

If we reach Line 8, we know we have a partially built SEDF, $L$. Firstly we try recursively building SEDFs which extend \(L\) and do not contain \(G[p]\). After that, we try building SEDFs where we add \(G[p]\) to each element of \(L\) which does not yet contain \(k\) elements.

One concern here is that we do not want to produce the same SEDF multiple times, where the only difference is that the elements of \(L\) are in a different order. Lemma \ref{lem:order} shows that by ensuring \(L\) is always ordered, we avoid producing the same SEDF multiple times. Line 16 gurantees that we only try adding a value to an empty element of \(L\) once per call to \textsc{SEDFSearch}.

\begin{lemma}\label{lem:order}

During the execution of \textsc{SEDFSearchStart}, the argument \(L\) of \\
\( \textsc{SEDFSearch}(G,n,m,k,\lambda,L,p) \) satisfies the conditions that
\begin{itemize}
\item there exists some \(i \in \{1, \ldots, m\}\) such that  \(\{ j: L[j] \neq \emptyset\} = \{1,\ldots, i\}\) and \(\{ j: L[j]=\emptyset\} = \{i+1,\ldots, m\}\), and
\item  \(\forall j \in \{1, \ldots, i-1\}. min(L[j]) < min(L[j+1])\)
\end{itemize}
 by ordering the elements of \(G\) (which we consider as a list) under the order \(x < y\) if \(x\) comes before \(y\).

\end{lemma}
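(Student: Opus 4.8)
The statement is an invariant about the recursive procedure \textsc{SEDFSearch}, so the natural approach is induction on the recursion, i.e.\ on the sequence of calls made during an execution of \textsc{SEDFSearchStart}. First I would set up the base case: \textsc{SEDFSearchStart} calls \textsc{SEDFSearch} with $L=[\{G[1]\},\emptyset,\dots,\emptyset]$ and $p=2$. Here the set of non-empty indices is exactly $\{1\}$, so taking $i=1$ satisfies the first bullet, and the second bullet is vacuous (no $j$ with $1\le j\le i-1=0$). So the invariant holds at the root of the recursion tree.

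For the inductive step, I would assume the invariant holds for the argument $L$ of some call $\textsc{SEDFSearch}(G,n,m,k,\lambda,L,p)$, with witnessing index $i$, and check that it holds for each recursive call this invocation makes. There are two kinds of recursive call. The first is the call on Line~13 with $L$ unchanged and $p$ incremented; since $L$ is literally the same list, the invariant is inherited verbatim. The second kind occurs inside the \textbf{for} loop over $i'\in[1..m]$: we form $L'$ by copying $L$ and adding $G[p]$ to $L'[i']$, then recurse on $L'$. I would argue that the \textbf{Continue} on Line~15 ensures we never touch an index already of size $k$ (irrelevant to the invariant but keeps the sets well-formed), and, crucially, that the only indices $i'$ for which a recursion actually happens are $i'\le i$ when $L[i']\neq\emptyset$, and $i'=i+1$ when $L[i]\ne\emptyset$ --- because Line~16 (``\texttt{if }$|L[i']|=0$\texttt{ return}'') breaks out of the loop immediately after the first empty set is processed, and by the induction hypothesis the empty sets are precisely the tail $\{i+1,\dots,m\}$, so the first empty set encountered as $i'$ runs upward is $L[i+1]$.

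With that structural fact in hand, I would split into the two subcases. If $G[p]$ is added to a \emph{non-empty} set $L[i']$ with $i'\le i$: the set of non-empty indices is unchanged ($=\{1,\dots,i\}$), so the first bullet holds with the same $i$; for the second bullet, $\min(L'[j])=\min(L[j])$ for all $j\neq i'$, and $\min(L'[i'])\le\min(L[i'])$, so all the strict inequalities $\min(L'[j])<\min(L'[j+1])$ are preserved (the value at position $i'$ can only decrease, which cannot break an inequality where it sits on the larger side, and where it sits on the smaller side we need $\min(L'[i'-1])<\min(L'[i'])$ --- here I must use that $G[p]$ is larger than every element already placed, because $p$ only ever increases along a root-to-node path and the identity $G[1]$ was placed first; so $\min(L'[i'])=\min(L[i'])$ and nothing changes). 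If instead $G[p]$ is added to the \emph{first empty} set $L[i+1]$: now the non-empty indices become $\{1,\dots,i+1\}$, so the first bullet holds with new index $i+1$; and $\min(L'[i+1])=G[p]$ which exceeds $\min(L[i])=\min(L'[i])$ by the same ``$p$ increases, identity first'' observation, giving the one new required inequality $\min(L'[i])<\min(L'[i+1])$, with all earlier inequalities untouched.

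\textbf{Main obstacle.}
The delicate point is not the bookkeeping of which indices are empty --- that follows cleanly from Line~16 --- but justifying that the newly inserted element $G[p]$ is strictly larger (in the list order on $G$) than every element already in $L$. This requires observing that along any path of recursive calls from \textsc{SEDFSearchStart} to the current invocation, the parameter $p$ is strictly increasing, and that the very first insertion put $G[1]$ (the identity) into $L[1]$; hence every element currently in $L$ is some $G[q]$ with $q<p$. I would state and use this as a separate easy sub-lemma (or an explicit remark in the induction), since without it one cannot conclude that adding $G[p]$ preserves the ``increasing minima'' condition --- indeed it is exactly this monotonicity that makes the ordering invariant self-sustaining. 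Everything else is routine case-checking.
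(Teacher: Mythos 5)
Your proof is correct and follows essentially the same route as the paper: induction over the recursive calls, with the base case at \textsc{SEDFSearchStart}, the unchanged-$L$ call inherited verbatim, and the insertion cases handled by noting that $G[p]$ exceeds every element already placed and that Line~16 restricts insertion into an empty set to the first such set. The only difference is that you make explicit (as a sub-lemma) the monotonicity of $p$ along the recursion path justifying ``$G[p]$ is larger than any value already in $L$'', which the paper simply asserts; this is a welcome clarification rather than a divergence.
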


\begin{proof}
We will proceed by induction. The first time \textsc{SEDFSearch} is called from \textsc{SEDFSearchStart}, only the first element of \(L\) is not the empty set, so the conditions follow trivially. The first time \textsc{SEDFSearch} recursively calls itself on Line 9 it passes the same value for \(L\). The subsequent recursive calls on Line 15 pass \(L\) with \(G[p]\) added to one member of \(L\).  If \(G[p]\) is inserted into a non-empty member of \(L\), then since this value is larger than any value already in \(L\), it will not affect the minimum, and the conditions hold. Otherwise, \(G[p]\) is added to an empty member of \(L\). This will be done for the first empty element of \(L\), where \(G[p]\) will be larger than any other value in any member of \(L\), and so the conditions are again satisfied. After this, Line 16 will cause \textsc{SEDFSearch} to immediately return and not insert \(G[p]\) into any later elements of \(L\).

\end{proof}

After \textsc{SEDFSearchState} has produced a list of SEDFs, we produce a final list of non-equivalent SEDFs using the Images package \cite{JeJoPfWa} in GAP.

\subsection{Summary of results}

Table \ref{table4} lists all admissible parameters sets for groups of order at most $24$.  

	\begin{table}[h]
	\begin{center}
	\begin{tabular}{c|c|cc|cc|cc|ccccc|cccc|cc}
	$n$ & 5 & 9 & & 10 & & 13 & & 17 & & & & & 19 & & & &  21 &  \\  \hline
	$m$ & 2 & 2& 3 & 2& 3 & 2& 4& 2& 2& 3& 4& 5& 2& 3& 3& 5& 2& 6  \\ \hline
	$k$ & 2 & 4& 2 & 3& 3 & 6& 2& 4& 8& 4& 4& 2& 6& 3& 6& 3& 10& 2  \\ \hline
	$\lambda$ & 1 & 2& 1 & 1& 2 & 3& 1& 1& 4& 2& 3& 1& 2& 1& 4& 2& 5& 1  \\
	\end{tabular}
	\caption{All admissible SEDF parameters for $n \leq 24$}
	\label{table4}
	\end{center}
	\end{table}
	
For abelian groups, the following parameter sets are ruled-out by results from the literature:
\begin{itemize}
\item Proposition \ref{nonexistence}(1): $(9,3,2,1)$, $(10,3,3,2)$, $(13,4,2,1)$, $(17,3,4,2)$, $(17,4,4,3)$, $(19,3,3,1)$, $(19,3,6,4)$;
\item Proposition \ref{nonexistence}(2): $(17,5,2,1)$, $(19,5,3,2)$;
\item Proposition \ref{PaStlambda=1}: $(21,6,2,1)$;
\item Theorem 4.3 of \cite{JeLi}: $(19,2,6,2)$, $(21,2,10,5)$.
\end{itemize}


Table \ref{table5} summarizes all non-equivalent abelian SEDFS found by computer search in groups of order up to $24$.   All found SEDFs can be classified in terms of constructions in the literature; the cases are as follows:
\begin{itemize}
\item[(a)] Proposition \ref{SEDFexistence} (1): construction $\{0,1,\ldots,k-1\}$, $\{k,2k, \ldots, k^2 \}$ given in \cite{PaSt};
\item[(b)] Proposition \ref{SEDFexistence} (2): construction corresponding to Paley difference sets, given in \cite{HuPa}: for prime power $n=q$, take the squares/non-squares of the multiplicative group of $\mathbb{F}_q$;
\item[(c)] Proposition \ref{SEDFexistence} (3): construction using cyclotomic classes in the multiplicative group of $\mathbb{F}_q$, given in \cite{BaJiWeZh}.
\item[(d)] Theorem \ref{newZconstruction}: construction $\{0,1, \ldots, a-1\} \cup \{2a, 2a+1, \ldots 3a-1\}$, $\cup_{i=1}^a \{ (4i-1)a, 4ia\}$.
\end{itemize}

\begin{table}[ht]
    \renewcommand{\arraystretch}{1.7}
\begin{center}
		\begin{tabular}{c|c|c|c|c}
			Parameters &{ Abelian group} & No of non-equiv. SEDFs& Example & Case \\ \hline
		$(5, 2 ,2 , 1  )$ & $\mathbb{Z}_5$   & 1 & $\{0,1\}, \{2,4\}$ & (a), (b), (d)  \\ \hline
                 \multirow{2}{*}{$(9,2,4,2)$}& $\mathbb{Z}_9$   & 0 & - & -  \\ 
		& $\mathbb{Z}_ 3 \times \mathbb{Z}_3$ & 1  & \makecell {$\{(1,0),(0,1,)(2,0),(0,2)\},$ \\ $ \{(1,1),(1,2),(2,1),(2,2)\}$} & (b)     \\ \hline
		$(10, 2 ,3 , 1  )$ & $\mathbb{Z}_{10} $  & 1 & $ \{0,1,2\}, \{3,6,9\}$ &  (a)  \\ \hline
		
		$(13, 2 , 6  ,3)$ & $\mathbb{Z}_{13}$   & 1 &  \makecell {$\{1,3,4,9,10,12\},$ \\ $ \{ 2,5,6,7,8,11 \}$} & (b)     \\ \hline
		
		$(17, 2 , 4  ,1)$ & $\mathbb{Z}_{17}$   & 2 & \makecell {$\{0,1,2,3\},\{4,8,12,16\}$ \\ $ \{1,4,13,16\},\{2,8,9,15 \}$}  & \makecell {(a) \\ (c), (d)}    \\ \hline
				
		$(17, 2 , 8  ,4)$ & $\mathbb{Z}_{17}$   & 1 & \makecell {$\{1,2,4,8,9,13,15,16\},$ \\ $ \{ 3,5,6,7,10,11,12,14 \}$} & (b)   \\ 
		\end{tabular}
		\caption{Non-equivalent SEDFs in abelian groups of order up to $24$}
		\label{table5}
\end{center}
\end{table}
There is no $(9,2,4,2)$-SEDF in the cyclic group of order $9$; this is ruled-out by Theorem 4.2 of \cite{JeLi}.  The two non-equivalent $(17,2,4,1)$-SEDFs are those guaranteed by Theorem \ref{kcomposite}; here the SEDF from the cyclotomic construction of Proposition \ref{SEDFexistence} (3) is equivalent to that from Theorem \ref{newZconstruction} (see Example \ref{equiv:ex}).

For non-abelian groups of order $n \leq 24$, we cannot rule-out any of the parameter sets from Table \ref{table4} using results from the existing literature.  However, there are no non-abelian groups of orders 5, 9, 13, 17, or 19. Therefore non-trivial SEDFs in groups of order up to $24$ are possible only for two orders, $10$ and $21$.  In each case there is one non-abelian group: the dihedral group $D_{10}$ of order 10 and  the semi-direct product ${\mathbb{Z}_7 \rtimes \mathbb{Z}_3}$ of order 21.  The non-trivial parameter sets for these are $(10,2,3,1)$, $(21,2,10,5)$ and $(21,6,2,1)$.  The results are shown in Table \ref{table6}.  

\begin{table}[ht]
    \renewcommand{\arraystretch}{1.4}
\begin{center}
		\begin{tabular}{c|c|c|c|c}
			Parameters &{ Non-abelian group} & No of non-equiv. SEDFs& Example & Case \\ \hline
		$(10, 2 ,3 , 1  )$ & $D_{10}$   & 1 & $\{e,s,r \}, \{sr, r^3, sr^4\}$ & Theorem \ref{thm:dihedral} \\ \hline
                 $(21, 2,10,5 )$ & ${\mathbb{Z}_7 \rtimes \mathbb{Z}_3}$  & 0 & - & - \\ \hline
		
		$(21, 6 ,2,1 )$ & ${\mathbb{Z}_7 \rtimes \mathbb{Z}_3}$  & 0 &  - & - \\ 
		
		\end{tabular}
		\caption{Non-equivalent SEDFs in non-abelian groups of order up to $24$}
		\label{table6}
\end{center}
\end{table}

\section{Concluding remarks and open problems}

In this paper, we have begun to explore the questions: for given parameters, which finite groups (abelian or non-abelian) contain SEDFs with these parameters? What are the possible structures of these SEDFs?  How many non-equivalent SEDFs exist with the same parameters?   The concept of equivalence also raises new questions about SEDF results in the literature - for example, how many non-equivalent $(243,11,22,20)$-SEDFs exist?

The case when $\lambda=1$ possesses a richer structural landscape than previously realised, with new SEDFs found in both abelian and non-abelian groups, and a wide range of abelian constructions obtainable recursively.  For no other fixed value of $\lambda$ are explicit families of SEDFs known; all other SEDF constructions have $\lambda$ as a function of $n$.  It is an open question whether  families can be found for fixed values of $\lambda>1$.

Further constructions for non-abelian SEDFs would be of interest.  Does there exist a SEDF with parameters which cannot be realised in an abelian group but can be obtained in a non-abelian setting? This is of particular interest as the known range of possible parameters for abelian SEDFs becomes increasingly constrained (eg. in  \cite{LeLiPr} it is conjectured that the known $(243,11,22,20)$-SEDF is the only abelian SEDF with $m>2$).  Do there exist non-abelian SEDFs with $\lambda=1$ and $m>2$?

Finally, we may ask whether certain phenomena observable in the computational results for SEDFs in groups of order up to $24$, are representative of a more general situation.  Is it the case that in a group of order $p^2$ for $p$ an odd prime, no SEDF with $m=2$ can exist in the cyclic group of order $p^2$?    In \cite{BaJiWeZh}, it is proved that there does not exist a $(p^2,m,k,\lambda)$-SEDF in an abelian group $\G$ with $p$ prime, $k>1$, $m>2$ and $\G$ cyclic; however we know of no such result in the literature for $m=2$. Does the finite field construction using squares and non-squares yield a unique (up to equivalence) $(p^2,2, \frac{p-1}{2}, \frac{p-1}{4})$-SEDF?

\subsection*{Acknowledgements} Thanks to Maura Paterson for her helpful comments on the paper, and to Gemma Crowe and Ailsa Robertson for related discussions.  The second author is supported by a Royal Society University Research Fellowship.

\end{document}